\DeclareMathAlphabet\gothic{U}{euf}{m}{n}
\def\eqnarray{\stepcounter{equation}\let\@currentlabel=\theequation
\global\@eqnswtrue
\tabskip\@centering\let\\=\@eqncr
$$\halign to \displaywidth\bgroup\hfil\global\@eqcnt\z@
  $\displaystyle\tabskip\z@{##}$&\global\@eqcnt\@ne
  \hfil$\displaystyle{{}##{}}$\hfil
  &\global\@eqcnt\tw@ $\displaystyle{##}$\hfil
  \tabskip\@centering&\llap{##}\tabskip\z@\cr}
\def\endeqnarray{\@@eqncr\egroup
      \global\advance\c@equation\m@ne$$\global\@ignoretrue}
\def\@yeqncr{\@ifnextchar [{\@xeqncr}{\@xeqncr[5pt]}}
\begin{document}

\newtheorem{lemma}{Lemma}[section]
\newtheorem{thm}[lemma]{Theorem}
\newtheorem{cor}[lemma]{Corollary}
\newtheorem{prop}[lemma]{Proposition}
\newtheorem{ddefinition}[lemma]{Definition}

\theoremstyle{definition}

\newtheorem{remark}[lemma]{Remark}
\newtheorem{exam}[lemma]{Example}

\newcommand{\gota}{\gothic{a}}
\newcommand{\gotb}{\gothic{b}}
\newcommand{\gotc}{\gothic{c}}
\newcommand{\gote}{\gothic{e}}
\newcommand{\gotf}{\gothic{f}}
\newcommand{\gotg}{\gothic{g}}
\newcommand{\gothh}{\gothic{h}}
\newcommand{\gotk}{\gothic{k}}
\newcommand{\gotm}{\gothic{m}}
\newcommand{\gotn}{\gothic{n}}
\newcommand{\gotp}{\gothic{p}}
\newcommand{\gotq}{\gothic{q}}
\newcommand{\gotr}{\gothic{r}}
\newcommand{\gots}{\gothic{s}}
\newcommand{\gott}{\gothic{t}}
\newcommand{\gotu}{\gothic{u}}
\newcommand{\gotv}{\gothic{v}}
\newcommand{\gotw}{\gothic{w}}
\newcommand{\gotz}{\gothic{z}}
\newcommand{\gotA}{\gothic{A}}
\newcommand{\gotB}{\gothic{B}}
\newcommand{\gotG}{\gothic{G}}
\newcommand{\gotL}{\gothic{L}}
\newcommand{\gotS}{\gothic{S}}
\newcommand{\gotT}{\gothic{T}}

\newcounter{teller}
\renewcommand{\theteller}{(\alph{teller})}
\newenvironment{tabel}{\begin{list}%
{\rm  (\alph{teller})\hfill}{\usecounter{teller} \leftmargin=1.1cm
\labelwidth=1.1cm \labelsep=0cm \parsep=0cm}
                      }{\end{list}}

\newcounter{tellerr}
\renewcommand{\thetellerr}{(\roman{tellerr})}
\newenvironment{tabeleq}{\begin{list}%
{\rm  (\roman{tellerr})\hfill}{\usecounter{tellerr} \leftmargin=1.1cm
\labelwidth=1.1cm \labelsep=0cm \parsep=0cm}
                         }{\end{list}}

\newcounter{tellerrr}
\renewcommand{\thetellerrr}{(\Roman{tellerrr})}
\newenvironment{tabelR}{\begin{list}%
{\rm  (\Roman{tellerrr})\hfill}{\usecounter{tellerrr} \leftmargin=1.1cm
\labelwidth=1.1cm \labelsep=0cm \parsep=0cm}
                         }{\end{list}}

\newcounter{proofstep}
\newcommand{\nextstep}{\refstepcounter{proofstep}\vertspace \par 
          \noindent{\bf Step \theproofstep} \hspace{5pt}}
\newcommand{\firststep}{\setcounter{proofstep}{0}\nextstep}

\newcommand{\Ni}{\mathds{N}}
\newcommand{\Qi}{\mathds{Q}}
\newcommand{\Ri}{\mathds{R}}
\newcommand{\Ci}{\mathds{C}}
\newcommand{\Ti}{\mathds{T}}
\newcommand{\Zi}{\mathds{Z}}
\newcommand{\Fi}{\mathds{F}}

\renewcommand{\proofname}{{\bf Proof}}

\newcommand{\vertspace}{\vskip10.0pt plus 4.0pt minus 6.0pt}

\newcommand{\simh}{{\stackrel{{\rm cap}}{\sim}}}
\newcommand{\ad}{{\mathop{\rm ad}}}
\newcommand{\Ad}{{\mathop{\rm Ad}}}
\newcommand{\alg}{{\mathop{\rm alg}}}
\newcommand{\clalg}{{\mathop{\overline{\rm alg}}}}
\newcommand{\Aut}{\mathop{\rm Aut}}
\newcommand{\arccot}{\mathop{\rm arccot}}
\newcommand{\capp}{{\mathop{\rm cap}}}
\newcommand{\rcapp}{{\mathop{\rm rcap}}}
\newcommand{\diam}{\mathop{\rm diam}}
\newcommand{\divv}{\mathop{\rm div}}
\newcommand{\dom}{\mathop{\rm dom}}
\newcommand{\codim}{\mathop{\rm codim}}
\newcommand{\RRe}{\mathop{\rm Re}}
\newcommand{\IIm}{\mathop{\rm Im}}
\newcommand{\tr}{{\mathop{\rm tr \,}}}
\newcommand{\Tr}{{\mathop{\rm Tr \,}}}
\newcommand{\Vol}{{\mathop{\rm Vol}}}
\newcommand{\card}{{\mathop{\rm card}}}
\newcommand{\rank}{\mathop{\rm rank}}
\newcommand{\supp}{\mathop{\rm supp}}
\newcommand{\sgn}{\mathop{\rm sgn}}
\newcommand{\essinf}{\mathop{\rm ess\,inf}}
\newcommand{\esssup}{\mathop{\rm ess\,sup}}
\newcommand{\Int}{\mathop{\rm Int}}
\newcommand{\lcm}{\mathop{\rm lcm}}
\newcommand{\loc}{{\rm loc}}
\newcommand{\HS}{{\rm HS}}
\newcommand{\Trn}{{\rm Tr}}
\newcommand{\n}{{\rm N}}
\newcommand{\SOT}{{\rm SOT}}
\newcommand{\WOT}{{\rm WOT}}

\newcommand{\at}{@}

\newcommand{\spann}{\mathop{\rm span}}
\newcommand{\one}{\mathds{1}}

\hyphenation{groups}
\hyphenation{unitary}

\newcommand{\ca}{{\cal A}}
\newcommand{\cb}{{\cal B}}
\newcommand{\cc}{{\cal C}}
\newcommand{\cd}{{\cal D}}
\newcommand{\ce}{{\cal E}}
\newcommand{\cf}{{\cal F}}
\newcommand{\ch}{{\cal H}}
\newcommand{\chs}{{\cal HS}}
\newcommand{\ci}{{\cal I}}
\newcommand{\ck}{{\cal K}}
\newcommand{\cl}{{\cal L}}
\newcommand{\cm}{{\cal M}}
\newcommand{\cn}{{\cal N}}
\newcommand{\co}{{\cal O}}
\newcommand{\cp}{{\cal P}}
\newcommand{\cs}{{\cal S}}
\newcommand{\ct}{{\cal T}}
\newcommand{\cx}{{\cal X}}
\newcommand{\cy}{{\cal Y}}
\newcommand{\cz}{{\cal Z}}

\newlength{\hightcharacter}
\newlength{\widthcharacter}
\newcommand{\covsup}[1]{\settowidth{\widthcharacter}{$#1$}\addtolength{\widthcharacter}{-0.15em}\settoheight{\hightcharacter}{$#1$}\addtolength{\hightcharacter}{0.1ex}#1\raisebox{\hightcharacter}[0pt][0pt]{\makebox[0pt]{\hspace{-\widthcharacter}$\scriptstyle\circ$}}}
\newcommand{\cov}[1]{\settowidth{\widthcharacter}{$#1$}\addtolength{\widthcharacter}{-0.15em}\settoheight{\hightcharacter}{$#1$}\addtolength{\hightcharacter}{0.1ex}#1\raisebox{\hightcharacter}{\makebox[0pt]{\hspace{-\widthcharacter}$\scriptstyle\circ$}}}
\newcommand{\scov}[1]{\settowidth{\widthcharacter}{$#1$}\addtolength{\widthcharacter}{-0.15em}\settoheight{\hightcharacter}{$#1$}\addtolength{\hightcharacter}{0.1ex}#1\raisebox{0.7\hightcharacter}{\makebox[0pt]{\hspace{-\widthcharacter}$\scriptstyle\circ$}}}

\thispagestyle{empty}

\vspace*{1cm}
\begin{center}
{\Large\bf Weak and strong approximation of semigroups \\[2mm]
on Hilbert spaces} \\[4mm]

\large R. Chill$^1$ and A.F.M. ter Elst$^2$

\end{center}

\vspace{4mm}

\begin{center}
{\bf Abstract}
\end{center}

\begin{list}{}{\leftmargin=1.8cm \rightmargin=1.8cm \listparindent=10mm 
   \parsep=0pt}
\item
For a sequence of uniformly bounded, degenerate semigroups on a Hilbert space, 
we compare various types of convergences to a limit semigroup. 
Among others, we show that convergence of the semigroups, or of the 
resolvents of the generators, in the weak operator topology, 
in the strong operator topology or in certain integral norms are equivalent 
under certain natural assumptions which are frequently met in applications.

\end{list}

\vspace{6mm}
\noindent
\today

\vspace{3mm}
\noindent
AMS Subject Classification: 35B40, 47D03, 47A05, 65N30.

\vspace{3mm}
\noindent
Keywords: Strong resolvent convergence, weak resolvent convergence, 
degenerate semigroup

\vspace{6mm}

\noindent
{\bf Home institutions:}    \\[3mm]
\begin{tabular}{@{}cl@{\hspace{10mm}}cl}
1. & Institut f\"ur Analysis & 
2. & Department of Mathematics  \\
& TU Dresden & 
  & University of Auckland  \\
& 01062 Dresden  &
  & Private bag 92019 \\
& Germany  &
   & Auckland 1142 \\ 
&   &
  & New Zealand \\[8mm]
\end{tabular}

\newpage
\setcounter{page}{1}

\section{Introduction} \label{Swsc1}

The subject of approximation of one-parameter semigroups of operators in various operator 
topologies is a fundamental topic in semigroup theory.
The Trotter--Kato theorem for the approximation of a $C_0$-semigroup in the strong operator 
topology is a classical result which can be found in many textbooks. More recently, the 
question of approximation in the weak operator topology has been studied, too; see, 
for example, Kr\'ol \cite{Kro09}, Eisner \& Sereny \cite{EiSe10} and Furuya \cite{Fur10}.
The purpose of this article is to study the relation between convergence of a sequence 
of semigroups in the weak operator topology and the convergence in the strong 
operator topology.
We concentrate on semigroups in Hilbert spaces whose generators are associated 
with $m$-sectorial forms.
We show that convergence in the weak operator topology and in the strong operator topology 
are equivalent in the case where all involved semigroups are selfadjoint, 
while they are not equivalent in the general case, even when all semigroups are analytic 
and contractive on the same sector.
In the case where all involved semigroups are analytic and contractive on the same 
sector we give additional conditions under which equivalence of convergence in the 
weak operator topology and the strong operator topology does hold.
In fact, equivalence between the two types of convergences holds if in addition the 
semigroups generated by the real parts of the associated forms converge in the 
weak operator topology, or if a monotonicity condition holds which is for example 
satisfied in the context of the Galerkin approximation. 

Motivated by applications to numerical analysis (the Galerkin approximation) or the 
stability of parabolic partial differential equations with respect to the 
underlying (unbounded) domain, we consider not only $C_0$-semigroups but 
general degenerate semigroups, that is, semigroups which are merely defined and strongly continuous 
on the open interval $(0,\infty)$,
and bounded on the open interval $(0,1)$. In this more general context, 
it is for example possible to study the approximation of a ($C_0$-) semigroup on an infinite 
dimensional Hilbert space by degenerate semigroups acting on 
finite dimensional subspaces.
Due to the variational character of the applications which we describe in Sections 4, 5 and 6, 
convergence in the weak operator topology is often easy to establish while convergence in the 
strong operator topology is comparatively more involved, especially when compactness arguments
(obtained by compact embeddings of domains of generators) are not at hand. 
In principle, the additional arguments which allow one to pass from convergence in 
the weak operator topology to convergence in the strong operator topology exist in 
a scattered way in the literature.
In the case of the Galerkin method, these additional arguments are sometimes given, 
but sometimes the reader is left with a statement which does not give the full 
(strong) convergence properties, especially if one is only interested in abstract 
existence results for solutions of parabolic partial differential equations.
The purpose of this article is to gather the arguments in an abstract context and to show that the 
equivalence between convergence in the weak and strong operator topology is a 
general principle independent from the concrete application in numerical analysis, 
the study of parabolic equations on varying domains or in homogenization.

\section{Preliminaries on degenerate semigroups} 
  
Let $X$ be a Banach space.
We call a function $S \colon (0,\infty )\to\cl (X)$, $t\mapsto S_t$, a {\bf degenerate semigroup} if
\begin{enumerate}
 \item[(i)] $S$ is strongly continuous on $(0,\infty )$, 
 \item[(ii)] $S_{t+s} = S_t \, S_s$ for every $t$, $s\in (0,\infty )$, and
 \item[(iii)] $\sup_{t\in (0,1)} \| S_t\| <\infty$.
\end{enumerate}
It is an exercise to show, using properties (ii) and (iii) above, that every 
degenerate semigroup is exponentially bounded, that is, there exist constants 
$M\geq 0$ and $\omega\in\Ri$ such that 
\[
 \| S_t\|\leq M\, e^{\omega t} 
\]
for all $t\in (0,\infty)$.

Let $A\subseteq X\times X$ be a graph in $X$.
Then $-A$ is called the {\bf generator} of a degenerate semigroup $S$ if there exists a 
$\omega\in\Ri$ such that $\lambda I +A$ is boundedly invertible for every 
$\lambda\in (\omega ,\infty )$ and if 
\[
 (\lambda I +A)^{-1}x = \int_0^\infty e^{-\lambda t} S_t x\, dt
\]
for all $x \in X$.
In particular, $\omega$ is chosen large enough so that the Laplace integral on the 
right-hand side converges. 
A crude estimate of the Laplace integral then yields that the pseudoresolvent 
$\lambda\mapsto (\lambda I + A)^{-1}$ satisfies the Hille--Yosida condition
\[
 \| (\lambda -\omega )^k \, (\lambda I + A)^{-k} \, \| \leq M
\]
uniformly for all $k\in\Ni$ and $\lambda \in (\omega,\infty)$.
There seems to be no characterisation of degenerate semigroups on Banach spaces known 
in the literature, for example solely in terms of the Hille--Yosida condition, 
the problem being that pseudoresolvents need not have dense range.
However, there are two important situations in which one has a positive result. 

The first is the situation of degenerate semigroups on {\em reflexive} spaces.
If $A$ is a graph on a reflexive Banach space $X$, if there exists an
$\omega \in \Ri$ such that $\lambda I +A$ is 
boundedly invertible for every $\lambda\in (\omega , \infty )$ and if the pseudoresolvent 
$\lambda\mapsto (\lambda I + A)^{-1}$ satisfies the Hille--Yosida condition above, 
then $-A$ is the generator of a degenerate semigroup $S$ for which, in addition, 
the limit
\[
 Px := \lim_{t\downarrow 0} S_t x 
\]
exists for every $x\in X$ and defines a bounded projection $P$.
Moreover, ${\rm range} \, P = \overline{{\rm range}\, (\lambda I + A)^{-1}}$ and 
${\rm ker} \, P = {\rm ker}\, (\lambda I + A)^{-1}$.
In particular, the range and the kernel of $(\lambda I + A)^{-1}$ do not depend on 
$\lambda$, and $P$ is a projection onto the closure of the domain of~$A$.

The second situation where a characterisation of the generator is available is the 
situation of analytic degenerate semigroups, that is, of 
degenerate semigroups which extend analytically to a sector of the form  
\[
 \Sigma_{\theta} := \{ z\in\Ci\setminus \{ 0\} \colon |{\rm arg}\, z| < \theta \} ,
\]
for some $\theta\in (0,\frac{\pi}{2}]$.
If $A$ is a graph on a Banach space $X$, if there exist $\omega\in\Ri$ and 
$\theta\in (0,\frac{\pi}{2}]$ such that $\lambda I+A$ is boundedly invertible 
for every $\lambda\in \omega + \Sigma_{\frac{\pi}{2} + \theta}$, and if
\[
 \sup_{\lambda \in \omega + \Sigma_{\frac{\pi}{2} + \theta}} \| 
(\lambda -\omega) (\lambda I +A)^{-1} \| <\infty ,
\]
then $-A$ generates a degenerate semigroup $S$ which extends analytically to 
a semigroup on the sector $\Sigma_\theta$.
For the results stated above, see Arendt \cite{Ar01} or Baskakov \cite{Bas04}.

Analytic degenerate semigroups on Hilbert spaces are, for example, generated by graphs associated with closed, (quasi-) sectorial forms, 
and our main results concern indeed solely this particular situation, with the exception of Lemma \ref{lwsc212}, where we 
consider general analytic semigroups on Banach spaces.
By a {\bf form} on a Hilbert space $H$ we mean here a sesquilinear mapping $\gota \colon V\times V \to\Ci$, 
where the {\bf form domain} $V$ is a linear subspace of $H$. 
We point out that the form domain $V$ need not be dense in~$H$.
The {\bf real part} $\Re\gota$ of a form $\gota$ is defined by 
$(\Re\gota) (u,v) := \frac12 (\gota (u,v) + \overline{\gota (v,u)})$, and similarly 
one may define the imaginary part which is, however, not used in this article.
A form $\gota$ is called {\bf sectorial} if there are
$\theta\in (0,\frac{\pi}{2})$ and $\gamma \in \Ri$ such that 
\[
 \gota (u) - \gamma \|u\|_H^2 \in \overline{\Sigma_\theta}
\]
for all $u\in V$, where $\gota(u) = \gota(u,u)$.
We call $\gamma$ a {\bf vertex} of $\gota$.
Finally, a sectorial form $\gota$ on $H$ is called 
{\bf closed} if there exists an $\omega\in\Ri$ 
such that $(u,v) \mapsto (\Re\gota) (u,v) + \omega (u,v)_H$
is a complete inner product on $V$. 

For a closed, sectorial form we define the {\bf associated graph}
\[
 A := \{ (u,f) \in H\times H \colon u\in V \text{ and } 
    \gota (u,v) = (f,v)_H  \mbox{ for all } v\in V \} . 
\]
Then this graph is {\bf $m$-sectorial} in the sense that 
there is an $\omega > 0$ such that 
$\lambda I+A$ is invertible and 
$\| \lambda (\lambda I +A)^{-1} \| \leq 1$ for every 
$\lambda\in \omega + \Sigma_{\theta + \frac{\pi}{2}}$. 
If $\gota$ is symmetric in the sense that $\gota = \Re\gota$, then the associated graph 
is self-adjoint. By applying \cite[Theorem~VI.1.27]{Ka80} to the part of an 
$m$-sectorial graph in the closure of its domain one can see that every 
$m$-sectorial graph is associated to a closed, sectorial form.

\section{Semigroup convergence} \label{Swsc2}

The first main result of this note is the following theorem for self-adjoint graphs 
and semigroups.
It asserts that pointwise convergence of the resolvents in the weak operator topology 
and in the strong operator topology are equivalent, and that the same is true for 
pointwise convergence of semigroups in the weak operator topology and pointwise convergence 
of semigroups in the strong operator topology, uniformly for times in compact subsets of 
$(0,\infty )$.

\begin{thm} \label{twsc201}
Let $H$ be a Hilbert space.
For all $n \in \Ni$ let $A_n$ and $A$ be positive self-adjoint graphs on $H$.
Let $S^{(n)}$ and $S$ be the degenerate semigroups generated by $-A_n$ and $-A$.
Then the following are equivalent.
\begin{tabeleq}
\item \label{twsc201-1}
$\lim_{n \to \infty} S^{(n)}_t = S_t$ in $(\cl(H),\WOT)$ for all $t \in (0,\infty)$.
\item \label{twsc201-2}
$\lim_{n \to \infty} \int_0^T (S^{(n)}_t f,g)_H \, dt = \int_0^T (S_t f,g)_H \, dt$
for all $T > 0$ and $f$, $g \in H$.
\item \label{twsc201-3}
$\lim_{n \to \infty} \int_0^T (S^{(n)}_t f,g(t))_H \, dt = \int_0^T (S_t f,g(t))_H \, dt$
for all $T > 0$, $f \in H$ and $g \in L_1([0,T],H)$.
\item \label{twsc201-4}
$\lim_{n \to \infty} \int_0^T \|(S^{(n)}_t - S_t) f\|_H \, dt = 0$
for all $T > 0$ and $f \in H$.
\item \label{twsc201-5}
$\lim_{n \to \infty} (\lambda \, I + A_n)^{-1} = (\lambda \, I + A)^{-1}$ 
in $(\cl(H),\SOT)$ for all $\lambda \in \Ci$ with $\RRe \lambda > 0$.
\item \label{twsc201-5.2}
There exists a $\lambda \in \Ci$ with $\RRe \lambda > 0$ such that 
$\lim_{n \to \infty} (\lambda \, I + A_n)^{-1} = (\lambda \, I + A)^{-1}$ 
in $(\cl(H),\SOT)$. 
\item \label{twsc201-5.4}
$\lim_{n \to \infty} (\lambda \, I + A_n)^{-1} = (\lambda \, I + A)^{-1}$ 
in $(\cl(H),\WOT)$ for all $\lambda \in \Ci$ with $\RRe \lambda > 0$.
\item \label{twsc201-5.5}
There exists a set $D \subseteq \Ci$ with accumulation point in the open right 
half-plane $\{ \mu\in\Ci : \RRe \mu > 0 \}$ such that 
$\lim_{n \to \infty} (\lambda \, I + A_n)^{-1} = (\lambda \, I + A)^{-1}$ 
in $(\cl(H),\WOT)$ for all $\lambda \in D$.
\item \label{twsc201-5.6}
There exists a $\lambda \in \Ci \setminus \Ri$ with $\RRe \lambda > 0$ such that 
$\lim_{n \to \infty} (\lambda \, I + A_n)^{-1} = (\lambda \, I + A)^{-1}$ 
in $(\cl(H),\WOT)$. 
\item \label{twsc201-6}
For all $f \in H$ and $\delta$, $T > 0$ with $\delta < T$ it follows that 
\[
\lim_{n \to \infty} \sup_{t \in [\delta,T]} \|(S^{(n)}_t - S_t) f\|_H = 0
.  \]
\end{tabeleq}
\end{thm}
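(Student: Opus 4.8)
The plan is to prove the equivalences by establishing a cycle of implications together with a few shortcuts, exploiting the spectral theorem for the self-adjoint graphs $A_n$ and $A$ throughout. Since each $A_n$ is positive and self-adjoint, the semigroup $S^{(n)}_t = e^{-tA_n}P_n$ (where $P_n$ is the projection onto $\overline{\dom A_n}$), and for $f \in H$ the function $t \mapsto S^{(n)}_t f$ is, after $t>0$, given by a bounded spectral integral; in particular $\|S^{(n)}_t f\|_H \le \|f\|_H$ and $S^{(n)}_t = (S^{(n)}_{t/2})^* S^{(n)}_{t/2}$, so $(S^{(n)}_t f, f)_H = \|S^{(n)}_{t/2} f\|_H^2 \ge 0$. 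The analogous facts hold for $S$. The natural route is: first show \ref{twsc201-5.6} $\Rightarrow$ \ref{twsc201-5.5} $\Rightarrow$ \ref{twsc201-5.4} $\Rightarrow$ \ref{twsc201-5} $\Rightarrow$ \ref{twsc201-5.2} for the resolvent statements (the interesting step being that \emph{weak} resolvent convergence upgrades to \emph{strong} resolvent convergence, and that convergence at one off-axis point propagates), then link the resolvent statements to the semigroup statements, and finally close the loop through the integral and supremum conditions \ref{twsc201-2}, \ref{twsc201-3}, \ref{twsc201-4}, \ref{twsc201-6}, \ref{twsc201-1}.

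For the resolvent block: \ref{twsc201-5.5} $\Rightarrow$ \ref{twsc201-5.4} uses that $\lambda \mapsto (\lambda I + A_n)^{-1} f$ is holomorphic and uniformly bounded on $\{\RRe \lambda > 0\}$ with values in $H$; weak convergence on a set with an accumulation point together with a Vitali/Montel argument in the weak topology forces weak convergence on the whole half-plane, and \ref{twsc201-5.6} is the special case of \ref{twsc201-5.5} with $D=\{\lambda\}$ — wait, that needs an accumulation point, so instead I would argue \ref{twsc201-5.6} $\Rightarrow$ \ref{twsc201-5.4} directly via the resolvent identity: knowing weak convergence at a single non-real $\lambda$, write $(\mu I + A_n)^{-1}$ for nearby $\mu$ as a Neumann series in $(\lambda I + A_n)^{-1}$ with coefficients bounded uniformly in $n$, obtain weak convergence on a neighbourhood, then propagate holomorphically. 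The key upgrade \ref{twsc201-5.4} $\Rightarrow$ \ref{twsc201-5} from weak to strong resolvent convergence is where the self-adjointness and positivity are essential: for real $\lambda > 0$ one has $(\lambda I + A_n)^{-1} \ge 0$ and, using the spectral calculus, $\|(\lambda I + A_n)^{-1/2} f\|_H^2 = ((\lambda I + A_n)^{-1} f, f)_H \to ((\lambda I + A)^{-1} f,f)_H = \|(\lambda I + A)^{-1/2} f\|_H^2$; combined with weak convergence of $(\lambda I+A_n)^{-1}f$ this gives strong convergence of $(\lambda I + A_n)^{-1/2} f$, hence of $(\lambda I + A_n)^{-1}f = (\lambda I+A_n)^{-1/2}\big((\lambda I+A_n)^{-1/2}f\big)$ once one also controls the outer factor (again by a norm-convergence argument on the symmetric expression $((\lambda I+A_n)^{-2}f,f)_H$, obtained by differentiating the scalar convergence in $\lambda$, or by iterating). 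Then \ref{twsc201-5} $\Rightarrow$ \ref{twsc201-5.2} is trivial, and \ref{twsc201-5.2} $\Rightarrow$ \ref{twsc201-5} follows from the resolvent identity as above.

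For the semigroup block: \ref{twsc201-5} $\Rightarrow$ \ref{twsc201-6} is a Trotter--Kato type argument adapted to the self-adjoint, degenerate setting — one passes from strong convergence of one resolvent to strong convergence of all resolvents (just shown), then to strong convergence of $e^{-tA_n}P_n f$ uniformly on compact subsets of $(0,\infty)$ by the standard approximation of the semigroup by powers of the resolvent (Yosida approximants), using the uniform bound $\|S^{(n)}_t\|\le 1$ and strong continuity on $(0,\infty)$; the restriction to $[\delta,T]$ rather than $[0,T]$ accommodates the degeneracy, i.e. the possible jump of $P_n$ at $t=0$. Then \ref{twsc201-6} $\Rightarrow$ \ref{twsc201-4} $\Rightarrow$ \ref{twsc201-3} $\Rightarrow$ \ref{twsc201-2} $\Rightarrow$ \ref{twsc201-1} are routine: \ref{twsc201-6} $\Rightarrow$ \ref{twsc201-4} by splitting $\int_0^T = \int_0^\delta + \int_\delta^T$ and using the uniform bound $\|(S^{(n)}_t - S_t)f\|_H \le 2\|f\|_H$ on $[0,\delta]$ to make the first part small; \ref{twsc201-4} $\Rightarrow$ \ref{twsc201-3} by $|\int_0^T (S^{(n)}_t f - S_t f, g(t))_H\,dt| \le \int_0^T \|(S^{(n)}_t-S_t)f\|_H \|g(t)\|_H\,dt$ and — since $g \in L_1$ need not be bounded — a truncation of $g$; \ref{twsc201-3} $\Rightarrow$ \ref{twsc201-2} by taking $g(t) = g$ constant; and \ref{twsc201-2} $\Rightarrow$ \ref{twsc201-1} by noting that $t \mapsto (S^{(n)}_t f, g)_H$ is, for $t>0$, real-analytic (spectral theorem) and uniformly bounded, so $L^1$-convergence on every $[0,T]$ together with an equicontinuity estimate coming from $\frac{d}{dt}(S^{(n)}_tf,g)_H = -(A_n S^{(n)}_t f, g)_H$ with $\|A_n S^{(n)}_t f\| \le t^{-1}\|f\|$ (analyticity bound) yields pointwise convergence for each fixed $t > 0$. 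It remains only to close the two blocks together, e.g. \ref{twsc201-1} $\Rightarrow$ \ref{twsc201-5.4} by Laplace transform: $(\lambda I + A_n)^{-1}f = \int_0^\infty e^{-\lambda t} S^{(n)}_t f\,dt$, and weak convergence of the integrand pointwise plus domination by $Me^{-\RRe\lambda\, t}\|f\|$ gives weak convergence of the resolvents for $\RRe\lambda>0$.

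The main obstacle is the upgrade \ref{twsc201-5.4} $\Rightarrow$ \ref{twsc201-5}, i.e. squeezing strong resolvent convergence out of weak resolvent convergence; everything hinges on the self-adjointness and positivity via the identity $((\lambda I+A_n)^{-1}f,f)_H = \|(\lambda I+A_n)^{-1/2}f\|_H^2$, and one must be a little careful that the scalar convergence $((\lambda I+A_n)^{-1}f,f)_H \to ((\lambda I+A)^{-1}f,f)_H$ — which \ref{twsc201-5.4} only gives for $\lambda$ in the right half-plane, including real $\lambda>0$ where positivity is available — really does control both halves of the product $(\lambda I+A_n)^{-1} = (\lambda I+A_n)^{-1/2}(\lambda I+A_n)^{-1/2}$. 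A clean way around any such subtlety is to first prove directly that weak convergence of one resolvent implies weak convergence of all resolvents and of all bounded Borel functions of $A_n$ applied to $f$ in the weak topology, then use $\|(\lambda I+A_n)^{-1}f\|_H^2 = ((\lambda I+A_n)^{-1}(\overline{\lambda} I+A_n)^{-1}f,f)_H = ((\lambda I+A_n)^{-1}(\lambda I + A_n)^{-1}f, f)_H$ for real $\lambda$ to get norm convergence, hence strong convergence of $(\lambda I+A_n)^{-1}f$ for real $\lambda>0$, and finally extend to complex $\lambda$ by the resolvent identity.
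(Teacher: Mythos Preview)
Your overall architecture is reasonable, and most of the individual steps are fine, but there is a genuine gap at the heart of the resolvent block: the implication \ref{twsc201-5.6}$\Rightarrow$\ref{twsc201-5.4} via a Neumann series does \emph{not} work. Writing $(\mu I+A_n)^{-1}=\sum_{k\ge 0}(\lambda-\mu)^k(\lambda I+A_n)^{-(k+1)}$ requires you to pass weak operator convergence to the powers $(\lambda I+A_n)^{-k}$, and weak operator convergence is not stable under composition: if $T_n\to T$ in WOT one need not have $T_n^2\to T^2$ in WOT. This is not a technicality you can patch; the Neumann-series argument makes no use of the hypothesis $\lambda\notin\Ri$, so if it were valid it would equally prove that weak resolvent convergence at a single \emph{real} $\lambda$ propagates --- and Example~\ref{xwsc203} in the paper shows precisely that this is false. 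Your ``clean way'' at the end, asserting that weak convergence of one resolvent directly gives weak convergence of all bounded Borel functions of $A_n$, has the same problem: it is true as a statement (for non-real $\lambda$), but only because one first upgrades to \emph{strong} convergence at $\lambda$; there is no purely weak route.

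The paper's fix is short and uses the non-reality of $\lambda$ in an essential way. Since $A_n$ is self-adjoint, $[(\lambda I+A_n)^{-1}]^*=(\bar\lambda I+A_n)^{-1}$, so
\[
\|(\lambda I+A_n)^{-1}f\|_H^2=\big((\bar\lambda I+A_n)^{-1}(\lambda I+A_n)^{-1}f,f\big)_H
=(\lambda-\bar\lambda)^{-1}\Big(\big((\bar\lambda I+A_n)^{-1}f,f\big)_H-\big((\lambda I+A_n)^{-1}f,f\big)_H\Big),
\]
the last equality being the resolvent identity in partial-fraction form. Weak convergence at $\lambda$ forces weak convergence at $\bar\lambda$ by taking adjoints, and the right-hand side is \emph{linear} in the resolvents, so it passes to the limit. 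Hence $\|(\lambda I+A_n)^{-1}f\|_H\to\|(\lambda I+A)^{-1}f\|_H$, which together with weak convergence gives \ref{twsc201-5.2} directly. This is exactly where $\lambda\ne\bar\lambda$ is used. Your idea of reaching $\|(\lambda I+A_n)^{-1}f\|_H^2=((\lambda I+A_n)^{-2}f,f)_H$ for real $\lambda$ by differentiating $\mu\mapsto((\mu I+A_n)^{-1}f,f)_H$ is a correct argument for \ref{twsc201-5.4}$\Rightarrow$\ref{twsc201-5} (via Vitali on the scalar holomorphic family), but it needs weak convergence for \emph{all} $\lambda$ as input and therefore cannot serve as the bridge out of~\ref{twsc201-5.6}.

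A smaller remark on organisation: the paper runs the semigroup implications in the direction (i)$\Rightarrow$(ii)$\Rightarrow$(iii)$\Rightarrow$(iv), where (iii)$\Rightarrow$(iv) is a two-line computation using self-adjointness,
\[
\int_0^T\|(S^{(n)}_t-S_t)f\|_H^2\,dt
=\tfrac12\int_0^{2T}\big((S^{(n)}_t-S_t)f,f\big)_H\,dt
-2\,\RRe\int_0^T\big((S^{(n)}_t-S_t)f,S_tf\big)_H\,dt,
\]
and then invokes Lemma~\ref{lwsc212} for (iv)$\Leftrightarrow$(v)$\Leftrightarrow$(vi)$\Leftrightarrow$(x). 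Your reversed route (x)$\Rightarrow$(iv)$\Rightarrow$(iii)$\Rightarrow$(ii)$\Rightarrow$(i), with (ii)$\Rightarrow$(i) via the analyticity bound $\|A_nS^{(n)}_t\|\le (et)^{-1}$ and an equicontinuity/Arzel\`a--Ascoli argument, is correct but heavier; once the resolvent block is repaired you may find the paper's direction more economical.
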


Theorem~\ref{twsc201} is for self-adjoint graphs on Hilbert spaces. 
Statements \ref{twsc201-4}, \ref{twsc201-5}, \ref{twsc201-5.2} and \ref{twsc201-6}
are, however, also equivalent for general analytic degenerate semigroups on Banach spaces. 
This is the contents of the next lemma.

\begin{lemma} \label{lwsc212}
Let $X$ be a Banach space.
For all $n \in \Ni$ let $A_n$ and $A$ be graphs on $X$, such that $-A_n$ and 
$-A$ generate analytic degenerate semigroups $S^{(n)}$ and $S$, respectively.
Assume there exists a $\theta \in (0,\frac{\pi}{2}]$ such that the degenerate semigroups 
$S^{(n)}$ and $S$ are uniformly bounded on the same sector $\Sigma_\theta$ with a bound independent of $n\in\Ni$.
Then the following assertions are equivalent.
\begin{tabeleq}
\item \label{lwsc212-2}
$\lim_{n \to \infty} \int_0^T \|(S^{(n)}_t - S_t) f\|_X \, dt = 0$
for all $T > 0$ and $f \in X$.
\item \label{lwsc212-3}
$\lim_{n \to \infty} (\lambda \, I + A_n)^{-1} = (\lambda \, I + A)^{-1}$ 
in $(\cl(X),\SOT)$ for all $\lambda \in \Ci$ with $\RRe \lambda > 0$.
\item \label{lwsc212-4}
There exists a $\lambda \in \Ci$ with $\RRe \lambda > 0$ such that 
$\lim_{n \to \infty} (\lambda \, I + A_n)^{-1} = (\lambda \, I + A)^{-1}$ 
in $(\cl(X),\SOT)$. 
\item \label{lwsc212-1}
For all $f \in X$ and $\delta$, $T > 0$ with $\delta < T$ it follows that 
\[
\lim_{n \to \infty} \sup_{t \in [\delta,T]} \|(S^{(n)}_t - S_t) f\|_X = 0
.  \]
\end{tabeleq}
\end{lemma}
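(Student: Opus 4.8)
The plan is to prove the cyclic chain of implications \ref{lwsc212-2} $\Rightarrow$ \ref{lwsc212-3} $\Rightarrow$ \ref{lwsc212-4} $\Rightarrow$ \ref{lwsc212-1} $\Rightarrow$ \ref{lwsc212-2}, of which \ref{lwsc212-3} $\Rightarrow$ \ref{lwsc212-4} is trivial. Fix $M \geq 1$ bounding $\|S^{(n)}_z\|$ and $\|S_z\|$ for all $z \in \Sigma_\theta$ and all $n$, so that in particular $\|S^{(n)}_t\| \leq M$ and $\|S_t\| \leq M$ for every $t > 0$. The quantitative input that drives everything is the \emph{uniform sectoriality} of the generators: rotating the Laplace representation $(\lambda I + A_n)^{-1} = \int_0^\infty e^{-\lambda t} S^{(n)}_t \, dt$, valid for $\RRe \lambda > 0$, onto rays $e^{i\alpha}(0,\infty)$ with $|\alpha| < \theta$ (Cauchy's theorem and the bound on $S^{(n)}$ on $\Sigma_\theta$), yields $\theta_1 \in (0,\theta)$ and $M_1 \geq 1$, independent of $n$, with $\|(\lambda I + A_n)^{-1}\| \leq M_1/|\lambda|$ for all $\lambda \in \Sigma_{\frac{\pi}{2}+\theta_1}$, and likewise for $A$.

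Two steps are short. For \ref{lwsc212-1} $\Rightarrow$ \ref{lwsc212-2} one splits $\int_0^T \|(S^{(n)}_t - S_t) f\| \, dt$ at a small $\delta$: the part over $(0,\delta)$ is at most $2 M \delta \|f\|$ and the part over $(\delta,T)$ vanishes as $n \to \infty$ by \ref{lwsc212-1}, so letting $\delta \downarrow 0$ gives \ref{lwsc212-2}. For \ref{lwsc212-2} $\Rightarrow$ \ref{lwsc212-3} one uses, for $\RRe \lambda > 0$,
\[
\|(\lambda I + A_n)^{-1} f - (\lambda I + A)^{-1} f\| \leq \int_0^T \|(S^{(n)}_t - S_t) f\| \, dt + \frac{2 M \|f\|}{\RRe \lambda}\, e^{-T \RRe \lambda} ,
\]
and lets $n \to \infty$ and then $T \to \infty$.

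The core of the lemma is \ref{lwsc212-4} $\Rightarrow$ \ref{lwsc212-1}. Suppose $(\lambda_0 I + A_n)^{-1} \to (\lambda_0 I + A)^{-1}$ in $(\cl(X),\SOT)$ for a single $\lambda_0$ with $\RRe \lambda_0 > 0$. The first and main task is to upgrade this to strong convergence $(\lambda I + A_n)^{-1} \to (\lambda I + A)^{-1}$, locally uniformly for $\lambda$ throughout the full sector $\Sigma_{\frac{\pi}{2}+\theta_1}$ — crucially also for $\RRe\lambda < 0$, where the Laplace formula is unavailable. Strong convergence together with the uniform bound $\|(\lambda_0 I + A_n)^{-1}\| \leq M_1/|\lambda_0|$ gives, by induction, strong convergence of all powers $(\lambda_0 I + A_n)^{-k} \to (\lambda_0 I + A)^{-k}$; feeding this into the Neumann series $(\lambda I + A_n)^{-1} = \sum_{k \geq 0} (\lambda_0 - \lambda)^k (\lambda_0 I + A_n)^{-k-1}$, whose tails are uniformly small in $n$ on the ball $\{|\lambda - \lambda_0| \leq |\lambda_0|/(2 M_1)\}$ by the uniform resolvent bound, gives uniform strong convergence there. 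A chain-of-balls argument along paths in the connected sector $\Sigma_{\frac{\pi}{2}+\theta_1}$, the uniform bound supplying at each centre a radius of convergence bounded below on compact subsets, then propagates strong convergence, uniformly on compacta, through the whole sector.

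It remains to translate this into semigroup convergence via the Dunford-type representation for analytic degenerate semigroups (cf. \cite{Ar01,Bas04}): for $t > 0$,
\[
S^{(n)}_t = \frac{1}{2\pi i} \int_\Gamma e^{\mu t}\, (\mu I + A_n)^{-1} \, d\mu ,
\]
and likewise for $S_t$, where $\Gamma$ is the fixed, suitably oriented contour made of the rays $\{r e^{\pm i \psi} : r \geq \rho\}$, $\psi \in (\frac{\pi}{2}, \frac{\pi}{2}+\theta_1)$, $\rho > 0$, joined by the arc $\{\rho e^{i\phi} : |\phi| \leq \psi\}$; one has $\Gamma \subseteq \Sigma_{\frac{\pi}{2}+\theta_1} \setminus \{0\}$, and by the uniform sectoriality this contour serves every $A_n$ and $A$ and makes each integral converge. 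For $f \in X$ and $\delta \leq t \leq T$ one splits $\Gamma$ into its part beyond radius $R$ and its compact remainder: on the former, $|e^{\mu t}| \leq e^{-t|\mu||\cos\psi|}$ and the resolvent bound $M_1/|\mu|$ give a contribution of order $(R \delta |\cos\psi|)^{-1}$, uniformly in $n$ and $t \in [\delta,T]$, hence small for $R$ large; on the compact remainder $|e^{\mu t}| \leq e^{R T}$ and the locally uniform resolvent convergence of the previous step force the contribution to $0$ as $n \to \infty$, uniformly in $t \in [\delta,T]$. Choosing $R$ first and then $n$ large proves \ref{lwsc212-1}. I expect the principal difficulty to be precisely this propagation of single-point strong resolvent convergence into the open left half-plane, together with a careful justification of the Dunford representation for semigroups that need not be strongly continuous at $0$; the contour estimate itself is then routine.
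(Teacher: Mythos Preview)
Your proof is correct and follows essentially the same route as the paper: the Laplace representation for \ref{lwsc212-2}$\Rightarrow$\ref{lwsc212-3}, propagation of single-point resolvent convergence through the full sector $\Sigma_{\frac{\pi}{2}+\theta_1}$ for \ref{lwsc212-4}$\Rightarrow$\ref{lwsc212-1}, and the Cauchy contour integral to recover the semigroup. The one noteworthy difference is in the propagation step: the paper observes that strong convergence of $(\lambda_0 I+A_n)^{-k}$ for all $k$ amounts to convergence of all Taylor coefficients of the holomorphic map $\lambda\mapsto(\lambda I+A_n)^{-1}$ at $\lambda_0$, and then invokes Vitali's theorem for vector-valued holomorphic functions \cite[Theorem~2.1]{ArNi00} to obtain locally uniform convergence on the sector; you instead carry out the Neumann-series/chain-of-balls argument by hand. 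Your argument is more self-contained and makes the role of the uniform resolvent bound explicit (it controls both the radius at each centre and the tail of the Neumann series), while the paper's citation of Vitali is shorter and immediately gives the same conclusion. Either way the contour estimate at the end is, as you say, routine.
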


\begin{proof}
`\ref{lwsc212-2}$\Rightarrow$\ref{lwsc212-3}'.
This follows by taking Laplace transforms of the semigroups $S^{(n)}$ and~$S$. 

`\ref{lwsc212-3}$\Rightarrow$\ref{lwsc212-4}'.
Trivial. 

`\ref{lwsc212-4}$\Rightarrow$\ref{lwsc212-3}'.
First Condition~\ref{lwsc212-4} implies that 
$\lim_{n \to \infty} (\lambda \, I + A_n)^{-k} = (\lambda \, I + A)^{-k}$ 
in $(\cl(X),\SOT)$ for every $k\in\Ni$.
Since the powers of $(\lambda \, I + A_n)^{-1}$ coincide, up to a scalar factor, 
with the derivatives of the holomorphic function $\mu \mapsto (\mu \, I + A_n)^{-1}$ at 
the point $\lambda$, Statement~\ref{lwsc212-3} follows by an application of 
Vitali's theorem \cite[Theorem 2.1]{ArNi00}.
(Compare also with \cite[Remark~3.8]{Ar01} for a slightly different argument).

`\ref{lwsc212-3}$\Rightarrow$\ref{lwsc212-1}'.
This follows from Arendt \cite[Theorem~5.2]{Ar01}. 
A different proof is as follows.
First it follows again from Vitali's theorem that 
$\lim_{n \to \infty} (\lambda \, I + A_n)^{-1} = (\lambda \, I + A)^{-1}$ 
in $(\cl(X),\SOT)$ for all $\lambda \in \Sigma_{\frac{\pi}{2} + \theta}$.
Secondly, for every $n\in\Ni$, $f\in X$ and $t>0$, one has the integral representation
\[
 S^{(n)}_t f 
= \frac{1}{2\pi i} \int_\Gamma e^{t \lambda} (\lambda \, I +A_n)^{-1}f \, d\lambda ,
\]
where $\Gamma$ is an appropriately chosen curve in $\Sigma_{\frac{\pi}{2} + \theta}$ connecting 
$e^{\pm i\theta'} \infty$ for some $\theta' \in (\frac{\pi}{2}, \frac{\pi}{2} + \theta )$.
Of course, this integral representation also holds when $S^{(n)}$ and 
$A_n$ are replaced by $S$ and $A$, respectively. 
The strong convergence of $(S^{(n)}_t)$, uniformly for $t$ in intervals 
of the form $[\delta , T]$ now follows from the locally uniform convergence 
of $(\lambda \, I+A_n)^{-1}f$ to $(\lambda \, I + A)^{-1} f$ and a rough estimate of the resolvents for large $\lambda$. 

`\ref{lwsc212-1}$\Rightarrow$\ref{lwsc212-2}'.
This follows from Lebesgue's dominated convergence theorem.
\end{proof}

Now we turn to the proof of the main theorem of this section.

\begin{proof}[{\bf Proof of Theorem \ref{twsc201}.}]
As mentioned above, the equivalence of the statements \ref{twsc201-4}, 
\ref{twsc201-5}, \ref{twsc201-5.2} and \ref{twsc201-6} follows from Lemma~\ref{lwsc212}. 

`\ref{twsc201-1}$\Rightarrow$\ref{twsc201-2}'.
This follows from the Lebesgue dominated convergence theorem.

`\ref{twsc201-2}$\Rightarrow$\ref{twsc201-3}'.
Let $T > 0$.
It follows from Statement~\ref{twsc201-2} that 
\[
\lim_{n \to \infty} \int_0^T ((S^{(n)}_t - S_t) f, \one_{[a,b]} g)_H \, dt = 0
\]
for all $f,g \in H$ and $a,b \in \Ri$ with $0 \leq a < b \leq T$.
Since the step functions are dense in $L_1([0,T],H)$ and the $S_n$ are contractive, 
Statement~\ref{twsc201-3} follows by a $3 \varepsilon$-argument.

`\ref{twsc201-3}$\Rightarrow$\ref{twsc201-4}'.
Let $T > 0$ and $f \in H$.
Then symmetry and the semigroup property give
\begin{eqnarray}
\lefteqn{
\int_0^T \|(S^{(n)}_t - S_t) f\|_H^2 \, dt
} \hspace*{20mm} \nonumber  \\*
& = & \int_0^T ((S^{(n)}_{2t} - S_{2t}) f,f)_H \, dt
   - 2 \RRe \int_0^T ((S^{(n)}_t - S_t) f, S_t f)_H \, dt  \nonumber  \\
& = & \frac{1}{2} \int_0^{2T} ((S^{(n)}_t - S_t) f,f)_H \, dt
   - 2 \RRe \int_0^T ((S^{(n)}_t - S_t) f, S_t f)_H \, dt  
\label{etwsc201;1}
\end{eqnarray}
for all $n \in \Ni$. 
Applying the hypothesis with $g(t) = f$ to the first term
of (\ref{etwsc201;1})
and with $g(t) = S_t f$ to the second term, 
one deduces that both terms on the right-hand side of (\ref{etwsc201;1}) tend to $0$ as $n\to\infty$. 
Hence $\lim_{n \to \infty} \int_0^T \|(S^{(n)}_t - S_t) f\|_H^2 \, dt = 0$.
A simple application of the Cauchy--Schwarz inequality, using the 
contractivity of all involved semigroups, yields also convergence in the $L_1$-sense. 

`\ref{twsc201-4}$\Leftrightarrow$\ref{twsc201-5}$\Leftrightarrow$\ref{twsc201-5.2}$\Leftrightarrow$\ref{twsc201-6}'.
This is a special case of Lemma~\ref{lwsc212}.

`\ref{twsc201-6}$\Rightarrow$\ref{twsc201-1}', `\ref{twsc201-5}$\Rightarrow$\ref{twsc201-5.4}$\Rightarrow$\ref{twsc201-5.5}' 
and `\ref{twsc201-5.4}$\Rightarrow$\ref{twsc201-5.6}'.
Trivial.

`\ref{twsc201-5.5}$\Rightarrow$\ref{twsc201-5.4}'.
This follows from Vitali's theorem.

`\ref{twsc201-5.6}$\Rightarrow$\ref{twsc201-5.2}'.
Let $\lambda \in \Ci \setminus \Ri$ with $\RRe \lambda > 0$ be such that 
$\lim_{n \to \infty} (\lambda \, I + A_n)^{-1} = (\lambda \, I + A)^{-1}$ 
in $(\cl(H),\WOT)$. 
Let $f \in H$.
Then 
\begin{eqnarray*}
\lim_{n \to \infty} \|(\lambda \, I + A_n)^{-1} f\|_H^2
& = & \lim_{n \to \infty} ((\overline \lambda \, I + A_n)^{-1}  
           \, (\lambda \, I + A_n)^{-1} f,f)_H  \\
& = & \lim_{n \to \infty} (\lambda - \overline \lambda)^{-1} \, 
   \Big( ((\overline \lambda \, I + A_n)^{-1} f,f)_H
      - ((\lambda \, I + A_n)^{-1} f,f)_H 
   \Big)   \\
& = & (\lambda - \overline \lambda)^{-1} \, 
   \Big( ((\overline \lambda \, I + A)^{-1} f,f)_H
      - ((\lambda \, I + A)^{-1} f,f)_H 
   \Big)   \\
& = & \|(\lambda \, I + A)^{-1} f\|_H^2
.
\end{eqnarray*}
Then Statement~\ref{twsc201-5.2} is valid.
\end{proof}

One might hope that the convergence in Statement~\ref{twsc201-6} of Theorem~\ref{twsc201}
is valid with $[\delta,T]$ replaced by $(0,T]$.
However, a counterexample has been provided by Daners \cite[Example~6.7]{Da05}.
There is the following characterisation of uniform convergence on $(0,T]$ in 
the strong operator topology.

\begin{lemma} \label{pwsc202}
Assume the assumptions and notation as in Lemma~{\rm \ref{lwsc212}}, and assume in addition that
the Banach space $X$ is reflexive. Suppose the four equivalent statements in Lemma~{\rm \ref{lwsc212}} are valid.
For all $n \in \Ni$ let $P_n$ and $P$ be the projections given by
\[
 P_n f := \lim_{t\downarrow 0} S^{(n)}_t f \text{ and } Pf := \lim_{t\downarrow 0} S_t f \quad (f\in X).
\]
Then the following are equivalent.
\begin{tabeleq}
\item \label{pwsc202-1}
$\lim_{n \to \infty} \sup_{t \in (0,T]} \|(S^{(n)}_t - S_t) f\|_X = 0$
for all $T > 0$ and $f \in X$.
\item \label{pwsc202-2}
There exists a $T > 0$ such that  
$\lim_{n \to \infty} \sup_{t \in (0,T]} \|(S^{(n)}_t - S_t) f\|_X = 0$
for all $f \in X$.
\item \label{pwsc202-3}
$\lim_{n \to \infty} P_n = P$ in $(\cl(X),\SOT)$.
\end{tabeleq}
\end{lemma}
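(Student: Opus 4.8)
The plan is to establish the cycle `\ref{pwsc202-1}$\Rightarrow$\ref{pwsc202-2}$\Rightarrow$\ref{pwsc202-3}$\Rightarrow$\ref{pwsc202-1}'. The first implication is trivial. The main work is the other two, and the key technical tool is the integral representation of an analytic degenerate semigroup together with the fact, available from Lemma \ref{lwsc212}, that on $[\delta,T]$ we already have strong convergence $S^{(n)}_t\to S_t$ uniformly in $t$, and that the resolvents converge strongly and locally uniformly on the sector $\Sigma_{\frac{\pi}{2}+\theta}$. Throughout I will denote the common bound by $M:=\sup_n\sup_{z\in\Sigma_\theta}\|S^{(n)}_z\|$, which also bounds $\|S_z\|$, hence $\|P_n\|,\|P\|\le M$.

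For `\ref{pwsc202-2}$\Rightarrow$\ref{pwsc202-3}': fix $f\in X$ and $\varepsilon>0$. Since $P_n f=\lim_{t\downarrow 0}S^{(n)}_t f$ and $Pf=\lim_{t\downarrow 0}S_t f$, and since the degenerate semigroup law gives $S^{(n)}_t P_n=S^{(n)}_t$ and $S_tP=S_t$ for $t>0$ (so that $P_nf$, $Pf$ are honest limits and $S^{(n)}_sf\to P_nf$ as $s\downarrow 0$), I would write, for small $s>0$,
\[
\|P_nf-Pf\|_X\le\|P_nf-S^{(n)}_sf\|_X+\|S^{(n)}_sf-S_sf\|_X+\|S_sf-Pf\|_X.
\]
The middle term is $\le\sup_{t\in(0,T]}\|(S^{(n)}_t-S_t)f\|_X$, which tends to $0$ by \ref{pwsc202-2}. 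The third term tends to $0$ as $s\downarrow 0$ by definition of $P$. The first term is the delicate one: I need it small uniformly in $n$. Here I use that on reflexive $X$ the projections $P_n$ exist and $S^{(n)}_sf\to P_nf$; more usefully, apply \ref{pwsc202-2} once more at the single time $s$ plus the $[\delta,T]$-convergence: for fixed small $s$, $S^{(n)}_sf\to S_sf$, and $\|P_nf-S^{(n)}_sf\|_X\le\|P_nf-Pf\|_X+\|Pf-S_sf\|_X+\|S_sf-S^{(n)}_sf\|_X$ — this is circular, so instead I bound $\|P_nf-S^{(n)}_sf\|_X$ directly. Since $S^{(n)}$ is analytic and $\sup_{z\in\Sigma_\theta}\|S^{(n)}_z\|\le M$, standard analytic-semigroup estimates give $\|S^{(n)}_sf-S^{(n)}_{s'}f\|_X\le$ a modulus of continuity that, for a degenerate analytic semigroup, one controls on $(0,T]$ using $\|P_ng-g\|$ for $g$ in a dense set; the cleanest route is: pick $s$ with $\sup_{t\in(0,T]}\|(S^{(n)}_t-S_t)f\|_X<\varepsilon$ for all $n\ge N$ (by \ref{pwsc202-2}), then $\|P_nf-S^{(n)}_sf\|_X=\lim_{u\downarrow 0}\|S^{(n)}_uf-S^{(n)}_sf\|_X\le 2\sup_{t\in(0,T]}\|(S^{(n)}_t-S_t)f\|_X+\|S_uf-S_sf\|_X\le 2\varepsilon+\|Pf-S_sf\|_X$ for $u$ small — and the last term $\to 0$ as $s\downarrow 0$. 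Combining, $\limsup_n\|P_nf-Pf\|_X\le 4\varepsilon$ after shrinking $s$, giving \ref{pwsc202-3}.

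For `\ref{pwsc202-3}$\Rightarrow$\ref{pwsc202-1}', which I expect to be the main obstacle: fix $f\in X$, $T>0$, $\varepsilon>0$. We already know $\sup_{t\in[\delta,T]}\|(S^{(n)}_t-S_t)f\|_X\to 0$ for every $\delta>0$ by Lemma \ref{lwsc212}, so it remains to control the regime $t\in(0,\delta]$ uniformly in $n$. Write $S^{(n)}_tf-S_tf=(S^{(n)}_t(P_nf-Pf))+(S^{(n)}_t-S_t)(Pf)$. The first piece is bounded by $M\|P_nf-Pf\|_X$, which is $<M\varepsilon$ for $n\ge N$ by \ref{pwsc202-3}, uniformly in $t\in(0,T]$. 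For the second piece put $g:=Pf$, so $Pg=g$ and $P_n$ no longer helps directly; but now I use the integral representation $S^{(n)}_t g=\frac{1}{2\pi i}\int_\Gamma e^{t\lambda}(\lambda I+A_n)^{-1}g\,d\lambda$ with $\Gamma\subset\Sigma_{\frac{\pi}{2}+\theta}$, together with strong, locally uniform resolvent convergence on that sector, to get, after splitting $\Gamma$ into a large-$|\lambda|$ tail and a compact part and using the uniform bound $\|(\lambda I+A_n)^{-1}\|\le C/|\lambda|$ (rescaled so that $0$ is in the resolvent set of the sectorial operator, which holds for positive $m$-sectorial graphs; in the general reflexive case one works with $\lambda$ bounded away from $0$ and absorbs the bounded part into $P$), that $\sup_{t\in(0,T]}\|(S^{(n)}_t-S_t)g\|_X\to 0$: the tail contributes $\le\varepsilon$ uniformly in $t$ and $n$ by absolute convergence, and on the compact part $e^{t\lambda}$ is uniformly bounded for $t\in(0,T]$ while $(\lambda I+A_n)^{-1}g\to(\lambda I+A)^{-1}g$ uniformly for $\lambda$ in that compact set. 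Hence $\limsup_n\sup_{t\in(0,T]}\|(S^{(n)}_t-S_t)f\|_X\le (M+1)\varepsilon+\varepsilon$, completing the cycle. The one point needing care — and the real obstacle — is making the contour argument work uniformly down to $t=0$ for a merely \emph{degenerate} analytic semigroup acting on $g=Pf\in\overline{\mathrm{range}\,P}=\overline{\mathrm{dom}\,A}$: on this subspace the semigroup is genuinely $C_0$, so the standard sectorial estimates apply after restricting to it, and the reflexivity hypothesis is exactly what guarantees $P_n,P$ exist and that this reduction is legitimate.
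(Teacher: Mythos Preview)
Your implication \ref{pwsc202-2}$\Rightarrow$\ref{pwsc202-3} is correct and is exactly the $3\varepsilon$-argument the paper alludes to, only written out in more detail. Your decomposition in \ref{pwsc202-3}$\Rightarrow$\ref{pwsc202-1}, namely $S^{(n)}_tf-S_tf=S^{(n)}_t(P_nf-Pf)+(S^{(n)}_t-S_t)Pf$, is also precisely the paper's, and the first summand is handled identically. The gap lies in your treatment of the second summand for $g=Pf\in\overline{\dom A}$. Your claim that ``the tail contributes $\le\varepsilon$ uniformly in $t$ and $n$ by absolute convergence'' is false: along a ray $\lambda=re^{i\theta'}$ with $\theta'\in(\tfrac{\pi}{2},\tfrac{\pi}{2}+\theta)$ the bound $\|(\lambda I+A_n)^{-1}g\|\le C\|g\|/r$ and $|e^{t\lambda}|=e^{tr\cos\theta'}$ yield a tail integrand of order $r^{-1}e^{tr\cos\theta'}$, and $\int_R^\infty r^{-1}e^{tr\cos\theta'}\,dr\to\infty$ as $t\downarrow 0$. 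Thus the contour representation gives no uniform control near $t=0$ for a generic $g$. Your proposed patch---restrict to $\overline{\dom A}$ where $S$ is $C_0$---does not work either, because $S^{(n)}$ need not leave $\overline{\dom A}$ invariant (it leaves $\overline{\dom A_n}$ invariant), so there is no common invariant subspace on which both families are $C_0$-semigroups.

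The paper closes this gap by invoking \cite[Theorem~4.2(b)]{Ar01}, which asserts directly that strong resolvent convergence yields $\sup_{t\in(0,T]}\|(S^{(n)}_t-S_t)g\|\to 0$ for every $g\in\overline{\dom A}$. If you want to argue from scratch, the missing idea is to first treat $g$ in the \emph{range} of the limit resolvent: write $g=(\lambda_0 I+A)^{-1}h$ and set $g_n=(\lambda_0 I+A_n)^{-1}h$, so that $g_n\to g$ and, crucially, $g_n\in\dom A_n$. Then $\|\tfrac{d}{dt}S^{(n)}_tg_n\|=\|S^{(n)}_t(h-\lambda_0 g_n)\|$ is bounded uniformly in $n$ and $t\in(0,T]$, so the maps $t\mapsto S^{(n)}_tg_n$ are equi-Lipschitz on $(0,T]$. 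Combining this equicontinuity with the $[\delta,T]$-convergence from Lemma~\ref{lwsc212} and the estimate $\|(S^{(n)}_t-S_t)g\|\le M\|g-g_n\|+\|S^{(n)}_tg_n-S_tg\|$ gives the result for such $g$; density then extends it to all of $\overline{\dom A}$. This regularisation step---replacing $g$ by $g_n$ lying in $\dom A_n$---is what your contour argument lacks.
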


\begin{proof}
`\ref{pwsc202-1}$\Rightarrow$\ref{pwsc202-2}'.
Trivial.

`\ref{pwsc202-2}$\Rightarrow$\ref{pwsc202-3}'.
Since $\lim_{t \downarrow 0} S^{(n)}_t f = P_n f$ for all $f \in X$ and 
$n \in \Ni$, with a similar identity for $S$ and $P$, the 
implication \ref{pwsc202-2}$\Rightarrow$\ref{pwsc202-3} follows by a $3\varepsilon$-argument.

`\ref{pwsc202-3}$\Rightarrow$\ref{pwsc202-1}'.
It follows from the strong resolvent convergence and 
\cite[Theorem~4.2 (b)]{Ar01} that 
$\lim_{n \to \infty} \sup_{t \in (0,T]} \|(S^{(n)}_t - S_t) f\|_X = 0$
for all $T > 0$ and $f \in \overline{\dom A}$. Recall that $P$ is a projection onto $\overline{\dom A}$. 
Now let $T > 0$ and $f \in X$.
Then 
\begin{eqnarray*}
\|S^{(n)}_t f - S_t f\|_X
& = & \|S^{(n)}_t \, P_n f - S_t \, P f\|_X  \\
& \leq & \|S^{(n)}_t \, (P_n f - P f)\|_X + \|(S^{(n)}_t - S_t) P f\|_X  \\
& \leq & M\, \|P_n f - P f\|_X + \|(S^{(n)}_t - S_t) P f\|_X
\end{eqnarray*}
for all $t \in (0,T]$ and $n \in \Ni$, where $M = \sup_{m \in \Ni,\; s \in (0,t]} \|S^{(m)}_s\| < \infty$,
from which Statement~\ref{pwsc202-1} follows.
\end{proof}

In Theorem~\ref{twsc201}\ref{twsc201-5.6} it is essential that $\lambda \not\in \Ri$.
In the next example there is convergence in $(\cl(H),\WOT)$ for $\lambda = 1$, but clearly 
not in $(\cl(H),\SOT)$.
The example is part of \cite[Example~2.3]{EiSe10}.

\begin{exam} \label{xwsc203}
Let $H = \ell_2$.
For all $n \in \Ni$ define $U_n \in \cl(H)$ by 
\[
U_n (x_1,x_2,\ldots) 
= (x_{n+1},\ldots,x_{2n}, x_1,\ldots,x_n, x_{2n+1},\ldots)
.  \]
Then $U_n$ is self-adjoint and 
$\lim_{n \to \infty} U_n = 0$ in $(\cl(H),\WOT)$, but $(U_n)$ does not converge to $0$ 
in $(\cl (H),\SOT)$ since the $U_n$ are also unitary.
Let $V_n = (1 - \frac{1}{n}) U_n$ for all $n \in \Ni$.
Then $\|V_n\| < 1$ and the Cayley transform
$A_n = (I + V_n)(I - V_n)^{-1} \in \cl(H)$ is a positive 
self-adjoint operator.
Moreover, 
$(I + A_n)^{-1}
= \frac{1}{2} \, (I - V_n)
= \frac{1}{2} \, (I - (1 - \frac{1}{n}) U_n)$.
So $\lim_{n \to \infty} (I + A_n)^{-1} = \frac{1}{2} \, I = (I + A)^{-1}$
in $(\cl(H),\WOT)$, where $A = I$ is a positive self-adjoint operator.
However, $\lim_{n \to \infty} (I + A_n)^{-1}$
does not exist in $(\cl(H),\SOT)$, that is Statement~\ref{twsc201-5.2}
in Theorem~\ref{twsc201} is not valid.
\end{exam}

We next present an example that symmetry of the generators in Theorem~\ref{twsc201}
cannot be replaced by $m$-sectoriality, even not with a uniform sector.

\begin{exam} \label{xwsc204}
Let $H$ be an infinite dimensional separable Hilbert space and 
let $S$ be the contraction semigroup defined by $S_t = e^{-t} \, I$
for all $t > 0$.
By \cite[Theorem~2.1]{Kro09} there exists a sequence $(S^{(n)})_{n \in \Ni}$
of unitary $C_0$-groups on $H$ such that for all $T > 0$ one has 
$\lim_{n \to \infty} S^{(n)}_t = S_t$ in $(\cl(H),\WOT)$ uniformly 
for all $t \in (0,T]$.
For all $n \in \Ni$ let $-B_n$ be the generator of $S^{(n)}$ and 
set $B = I$.
Then, by taking Laplace transforms of the respective semigroups, 
$\lim_{n \to \infty} (\lambda \, I + B_n)^{-1} = (\lambda \, I + B)^{-1}$
in $(\cl(H),\WOT)$ for all $\lambda \in \Ci$ with $\RRe \lambda > 0$.
Clearly, for each $t > 0$ 
one does not have $\lim_{n \to \infty} S^{(n)}_t = S_t$ in $(\cl(H),\SOT)$,
since the $S^{(n)}$ are isometric while $S$ is not.
Hence for all $\lambda \in \Ci$ with $\RRe \lambda > 0$ one does not have 
$\lim_{n \to \infty} (\lambda \, I + B_n)^{-1} = (\lambda \, I + B)^{-1}$
in $(\cl(H),\SOT)$, see Lemma~\ref{lwsc212}.

The operator $B_n$ is not invertible in general, but $C_n = I + B_n$ is 
$m$-accretive and invertible for all $n \in \Ni$.
Set $C = I + B = 2I$.
Then $\lim_{n \to \infty} (\lambda \, I + C_n)^{-1} = (\lambda \, I + C)^{-1}$
in $(\cl(H),\WOT)$ for all $\lambda \in \Ci$ with $\RRe \lambda > 0$.
Moreover, for all $\lambda \in \Ci$ with $\RRe \lambda > 0$ one does not have 
$\lim_{n \to \infty} (\lambda \, I + C_n)^{-1} = (\lambda \, I + C)^{-1}$
in $(\cl(H),\SOT)$.
Finally, let $A_n = C_n^{1/2}$ for all $n \in \Ni$.
Then $A_n$ is $m$-sectorial with vertex zero and semiangle~$\frac{\pi}{4}$
by \cite[Theorem~V.3.35]{Ka80} and \cite[Theorem~3.8.3]{ABHN01}.
Since 
\[
(\lambda \, I + A_n)^{-1}
= \frac{1}{\pi} \int_0^\infty
   \frac{\sqrt{\mu}}{\lambda^2 + \mu} \, (\mu \, I + C_n)^{-1} \, d\mu
\]
for all $\lambda \in (0,\infty)$ by \cite[(A1)]{Kat61}, it follows that 
$\lim_{n \to \infty} (\lambda \, I + A_n)^{-1} = (\lambda \, I + \sqrt{2} I)^{-1}$
in $(\cl(H),\WOT)$ for all $\lambda \in (0,\infty)$.
Note that $C_n = A_n^2$ and therefore 
$(I + C_n)^{-1} = (i \, I + A_n)^{-1} \, (- i \, I + A_n)^{-1}$
for all $n \in \Ni$.
So if $\lim_{n \to \infty} (i I + A_n)^{-1}$ converges in $(\cl(H),\SOT)$, then also 
$\lim_{n \to \infty} (-i I + A_n)^{-1}$ converges in $(\cl(H),\SOT)$ 
(compare with the argument in the proof of Lemma \ref{lwsc212} using Vitali's theorem and 
the fact that $\pm i$ lie in the same component of analyticity of the resolvent)
and hence
$\lim_{n \to \infty} (I + C_n)^{-1}$ converges in 
$(\cl(H),\SOT)$, which is a contradiction.
Thus Theorem~\ref{twsc201} cannot be extended to $m$-sectorial operators.
\end{exam}

Instead of considering the square roots $A_n = C_n^\frac12$ one could also consider 
the fractional powers $A_n = C_n^\alpha$ for arbitrary $\alpha\in (0,1)$.
The argument which allows one to pass from the convergence of the resolvents of $C_n$ in 
the weak operator topology to the convergence of the resolvents of $A_n$ in the 
weak operator topology (and back) then simply relies on a functional calculus 
representation of the resolvent of $A_n$ in terms of a contour integral over the 
resolvent of $C_n$ and vice versa \cite{Hs06}.
This means that the angle of sectoriality in the above counterexample can be 
chosen arbitrarily small. 

\smallskip
 
A variant of Theorem \ref{twsc201} is true if, in addition, one also requires 
weak resolvent convergence for the 
real parts of the generators, or, more precisely, for the operators associated 
with the real parts of the involved forms. Under this additional assumption one again has that 
weak resolvent convergence 
implies strong resolvent convergence.

\begin{thm} \label{twsc211}
Let $H$ be a Hilbert space.
For all $n \in \Ni$ let $\gota_n$, $\gota$ be 
closed sectorial sesquilinear forms in $H$ with 
vertex zero.
Let $A_n$, $A$, $R_n$ and $R$ be the $m$-sectorial graphs associated with 
$\gota_n$, $\gota$, $\Re \gota_n$ and $\Re \gota$, respectively.
Suppose there exist $\lambda,\lambda' \in \Ci$ with $\RRe \lambda > 0$,
$\RRe \lambda' > 0$ and $\lambda' \not\in \Ri$ such
that 
\[
\lim_{n\to\infty} (\lambda \, I + A_n)^{-1} = (\lambda I + A)^{-1}
\quad \mbox{in } (\cl(H),\WOT)
\]
and 
\[
\lim_{n\to\infty} (\lambda' \, I + R_n)^{-1} = (\lambda' I + R)^{-1}
\quad \mbox{in } (\cl(H),\WOT)
.  \]
Then $\lim_{n\to\infty} (I + A_n)^{-1} = (I + A)^{-1}$ in $(\cl(H),\SOT)$ and, for
all $f\in X$ and $\delta$, $T>0$ with $\delta <T$,
\[
 \lim_{n\to\infty} \sup_{t\in [\delta ,T]} \| (S_t^{(n)} - S_t )f\|_X = 0 .
\]
\end{thm}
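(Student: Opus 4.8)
The plan is to reduce to the self-adjoint situation covered by Theorem~\ref{twsc201} applied to the real parts, and to use the resolvent identity relating $A_n$ and $R_n$. First I would record the basic algebraic relation. Since $\gota_n = \Re\gota_n + i\,\Im\gota_n$ and both forms have vertex zero on the same domain $V_n$, the $m$-sectorial graph $A_n$ and the self-adjoint graph $R_n$ satisfy, for $\RRe\mu>0$,
\[
(\mu I + A_n)^{-1} - (\mu I + R_n)^{-1}
= -(\mu I + A_n)^{-1}\,(A_n - R_n)\,(\mu I + R_n)^{-1},
\]
where $A_n - R_n$ is (up to a bounded factor) the skew-adjoint part coming from $i\,\Im\gota_n$. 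The point of the hypothesis is that $A_n - R_n$ is relatively form-bounded with respect to $R_n$ with a uniform constant (this is built into sectoriality with a fixed semiangle, which follows from the two forms sharing a vertex and from the uniform Hille--Yosida/sector bounds). I would phrase everything through the associated forms so that quantities like $\|(\Re\gota_n)[u]\|$ control $\|\gota_n[u]\|$.

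Next I would extract weak resolvent convergence for $R_n$ at \emph{all} $\mu$ with $\RRe\mu>0$ (from the single point $\lambda'\notin\Ri$) using Theorem~\ref{twsc201} applied to the self-adjoint graphs $R_n$, $R$: the hypothesis gives statement \ref{twsc201-5.6}, hence all of Theorem~\ref{twsc201}, in particular \ref{twsc201-5}, i.e.\ $(\mu I + R_n)^{-1} \to (\mu I + R)^{-1}$ in $(\cl(H),\SOT)$ for every $\mu$ with $\RRe\mu>0$, as well as the corresponding strong semigroup convergence for the symmetric semigroups $T^{(n)}$ generated by $-R_n$. The task is then to upgrade weak convergence of $(\lambda I + A_n)^{-1}$ to strong convergence. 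I would do this by showing $\|(\lambda I + A_n)^{-1} f\|_H \to \|(\lambda I + A)^{-1} f\|_H$ for each $f$; together with weak convergence in a Hilbert space this forces strong convergence. For the norm convergence I would write $u_n := (\lambda I + A_n)^{-1} f$, so that $(\lambda I + R_n) u_n = f - (A_n - R_n) u_n$, giving
\[
u_n = (\lambda I + R_n)^{-1} f - (\lambda I + R_n)^{-1}(A_n - R_n) u_n.
\]
A uniform form-coercivity estimate shows $(u_n)$ is bounded in the form norm of $R_n$, hence weakly convergent along subsequences in a suitable sense, and the strong convergence of $(\lambda I + R_n)^{-1}$ lets me pass to the limit in this identity and identify the limit as $(\lambda I + A)^{-1} f$; a standard lower-semicontinuity plus upper-bound argument (testing the form against $u_n$ itself and using $\RRe\gota_n[u_n]=(\Re\gota_n)[u_n]\ge 0$) then yields $\|u_n\|_H \to \|(\lambda I + A)^{-1} f\|_H$. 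This gives $(\lambda I + A_n)^{-1}\to(\lambda I + A)^{-1}$ in $(\cl(H),\SOT)$.

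Once strong resolvent convergence of $A_n$ at the one point $\lambda$ is in hand, the rest is automatic: by the argument of Lemma~\ref{lwsc212} (Vitali's theorem, using that the $A_n$ are uniformly $m$-sectorial with a common sector so the resolvents are holomorphic and uniformly bounded on a fixed sector $\omega+\Sigma_{\pi/2+\theta}$), strong convergence propagates to all $\mu$ in the sector, in particular to $\mu=1$, and then the contour-integral representation of $S^{(n)}_t$ yields $\sup_{t\in[\delta,T]}\|(S^{(n)}_t - S_t)f\|\to 0$ for every $f$ and $0<\delta<T$. I expect the main obstacle to be the passage-to-the-limit step: one must control the ``cross term'' $(\lambda I + R_n)^{-1}(A_n - R_n) u_n$ along a sequence in which only the \emph{weak} behaviour of $u_n$ is initially known, and verify that the uniform sectoriality really does furnish a bound on $\|\Im\gota_n[u_n]\|$ in terms of $(\Re\gota_n)[u_n]$ that survives the limit — this is where the shared vertex and uniform angle are used essentially, and where Example~\ref{xwsc204} shows the argument must fail without the extra weak convergence of the real parts.
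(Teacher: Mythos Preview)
Your overall architecture matches the paper's: show $u_n := (\lambda I + A_n)^{-1} f \to u := (\lambda I + A)^{-1} f$ strongly by proving $\limsup_n\|u_n\|_H \leq \|u\|_H$, then invoke Lemma~\ref{lwsc212}. But the resolvent-identity detour through $(\lambda I + R_n)^{-1}(A_n - R_n)u_n$ is ill-posed at the operator level --- $u_n \in \dom A_n$ does not force $u_n \in \dom R_n$, so $R_n u_n$ need not lie in $H$ --- and is in any case unnecessary. The paper never splits $A_n$ as an operator; it simply tests $v = u_n$ in the form equation to get $(\Re\gota_n)(u_n) + (\RRe\lambda)\|u_n\|_H^2 = \RRe(f,u_n)_H$, and records the analogous identity for~$u$.

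The genuine gap is in the norm-convergence step. From those two identities one needs
\[
(\Re\gota)(u) \;\leq\; \liminf_{n\to\infty}(\Re\gota_n)(u_n),
\]
and your parenthetical ``using $(\Re\gota_n)[u_n] \geq 0$'' does not deliver this: positivity alone yields only $(\RRe\lambda)\limsup_n\|u_n\|_H^2 \leq \RRe(f,u)$, which is in general strictly larger than $(\RRe\lambda)\|u\|_H^2 = \RRe(f,u) - (\Re\gota)(u)$. The displayed inequality is a Mosco lower-semicontinuity statement for a \emph{varying} sequence of forms along a weakly-$H$ convergent sequence, and it holds precisely because strong resolvent convergence of the self-adjoint $R_n$ to $R$ (which you correctly extract via Theorem~\ref{twsc201}\ref{twsc201-5.6}$\Rightarrow$\ref{twsc201-5}) is equivalent to Mosco convergence of the symmetric forms $\Re\gota_n$ to $\Re\gota$. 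The paper invokes Attouch \cite[Theorem~3.26]{Att84} or Mosco \cite[Theorem~2.4.1]{Ms94} at exactly this point. This is where the hypothesis on the real parts actually enters, not through any bound on $\Im\gota_n$ in terms of $\Re\gota_n$; uniform sectoriality alone cannot supply the inequality, as Example~\ref{xwsc204} confirms. With the Mosco inequality in hand, the chain $(\RRe\lambda)\limsup_n\|u_n\|_H^2 = \RRe(f,u) - \liminf_n(\Re\gota_n)(u_n) \leq \RRe(f,u) - (\Re\gota)(u) = (\RRe\lambda)\|u\|_H^2$ closes, and the rest proceeds as you outline.
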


\begin{proof}
It suffices to show that 
$\lim_{n\to\infty} (\lambda \, I + A_n)^{-1} = (\lambda \, I + A)^{-1}$ in 
$(\cl(H),\SOT)$.
Let $f \in H$.
Write $u_n := (\lambda \, I + A_n)^{-1} f$ and $u := (\lambda \, I + A)^{-1} f$
for all $n \in \Ni$.
Then $\lim_{n\to\infty} u_n = u$ weakly in $H$ by the assumed convergence of resolvents in the weak operator topology. Therefore we have to prove that $\lim_{n\to\infty} u_n = u$ (strongly) in $H$.

Let $n \in \Ni$.
Then 
$\gota_n(u_n,v) + \lambda \, (u_n,v)_H = (f,v)_H$
for all $v \in V_n$.
Choosing $v = u_n$ and taking the real part gives
\begin{equation}
\RRe \gota_n(u_n) + (\RRe \lambda) \, \|u_n\|_H^2 
= \RRe (f,u_n)
.
\label{etwsc211;1}
\end{equation}
Similarly
\begin{equation}
\RRe \gota(u) + (\RRe \lambda) \, \|u\|_H^2 
= \RRe (f,u)
.
\label{etwsc211;2}
\end{equation}
Since $\lim_{n\to\infty} u_n = u$ weakly in $H$, it follows from (\ref{etwsc211;1})
that 
\[
(\RRe \lambda) \limsup_{n \to \infty} \|u_n\|_H^2
= \limsup_{n \to \infty} \Big( \RRe (f,u_n) - \RRe \gota_n(u_n) \Big)
= \RRe (f,u) - \liminf_{n \to \infty} \RRe \gota_n(u_n)
.  \]
Now $\Re \gota_n$ and $\Re \gota$ are symmetric closed sesquilinear forms.
Moreover, by assumption and by 
Theorem~\ref{twsc201}\ref{twsc201-5.6}$\Rightarrow$\ref{twsc201-5} one has 
$\lim_{n\to\infty} (\lambda' \, I + R_n)^{-1} = (\lambda' I + R)^{-1}$
in $(\cl(H),\SOT)$.
Using again that $\lim_{n\to\infty} u_n = u$ weakly in $H$, one deduces from 
Attouch \cite[Theorem~3.26]{Att84} 
(or for a shorter proof for forms, see Mosco \cite[Theorem~2.4.1]{Ms94}) the bound
$(\Re \gota)(u) \leq \liminf_{n \to \infty} (\Re \gota_n)(u_n)$.
Therefore 
\[
(\RRe \lambda) \limsup_{n \to \infty} \|u_n\|_H^2
= \RRe (f,u) - \liminf_{n \to \infty} \RRe \gota_n(u_n)
\leq \RRe (f,u) - \RRe \gota(u)
= (\RRe \lambda) \, \|u\|_H^2
,  \]
where we used (\ref{etwsc211;2}) in the last step.
So $\lim_{n\to\infty} u_n = u$ in $H$ and the convergence of resolvents in the strong operator topology follows. The remaining assertion on the convergence of semigroups follows from Lemma \ref{lwsc212}.
\end{proof}

We finish this section by presenting a theorem in which we deal with a
single form $\gota$ which does not have to be symmetric and where the approximation is connected to 
a space approximation of the form domain. It is not a 
corollary to the main theorem (Theorem \ref{twsc201}) nor is it an immediate 
consequence of Theorem \ref{twsc211}. We rather give a variant
of the proof of the latter which does not use the Mosco convergence hidden in the
references to Attouch \cite{Att84} or Mosco \cite{Ms94}.

\begin{thm} \label{twsc301}
Let $V$ and $H$ be Hilbert spaces such that $V$ is continuously 
embedded in~$H$.
Let $\gota \colon V \times V \to \Ci$ be a closed, sectorial, sesquilinear form.
Let $A$ be the $m$-sectorial graph in $H$ associated with $\gota$.
Let $(V_n)$ be an increasing sequence of closed subspaces of $V$ such that
$\bigcup_{n \in \Ni} V_n$ is dense in $V$.
For all $n \in \Ni$ let $\gota_n = \gota|_{V_n \times V_n}$.
Further let $A_n$ be the $m$-sectorial graph in $H$ associated with 
$\gota_n$.
Let $S^{(n)}$ and $S$ be the semigroups generated by $-A_n$ and $-A$, respectively.
Then
\[
\lim_{n \to \infty} \sup_{t \in (0,T]} \|(S^{(n)}_t - S_t) f\|_H = 0
\]
for every $T > 0$ and every $f \in H$.
\end{thm}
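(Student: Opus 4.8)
The plan is to reduce the statement to Lemmas~\ref{lwsc212} and~\ref{pwsc202}. After replacing $\gota$ by $\gota+c\,(\cdot,\cdot)_H$ with $c$ real and large (which multiplies every semigroup by $e^{-ct}$ and leaves the assertion on $(0,T]$ unchanged) we may assume that $\gota$, and hence every $\gota_n=\gota|_{V_n\times V_n}$, has vertex $0$; moreover each $\gota_n$ is closed and sectorial with the same semiangle as $\gota$, since $V_n$ is a closed subspace of $V$. Consequently the graphs $A_n$ are $m$-sectorial with constants independent of $n$, and the semigroups $S^{(n)}$ and $S$ are analytic and uniformly bounded (indeed contractive) on a common sector $\Sigma_\theta$. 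Since $H$ is reflexive, Lemmas~\ref{lwsc212} and~\ref{pwsc202} apply, and by the latter the assertion is equivalent to the conjunction of (a) strong resolvent convergence $(\lambda\,I+A_n)^{-1}\to(\lambda\,I+A)^{-1}$ in $(\cl(H),\SOT)$ for some (hence every) $\lambda$ with $\RRe\lambda>0$, and (b) $P_n\to P$ in $(\cl(H),\SOT)$, where $P_nf=\lim_{t\downarrow0}S^{(n)}_tf$ and $Pf=\lim_{t\downarrow0}S_tf$.

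To prove~(a) I would run the classical Galerkin (Céa) argument. Fix $\lambda>0$ real and so large that $\gotb:=\gota+\lambda\,(\cdot,\cdot)_H$ is bounded and coercive on $V$, say $\RRe\gotb(u)\geq c\,\|u\|_V^2$ and $|\gotb(u,v)|\leq M\,\|u\|_V\,\|v\|_V$ for all $u,v\in V$; this is possible because $\gota$ is closed and sectorial and $V\hookrightarrow H$ is continuous. By the Lax--Milgram lemma, for $f\in H$ the element $u:=(\lambda\,I+A)^{-1}f$ is the unique $u\in V$ with $\gotb(u,v)=(f,v)_H$ for all $v\in V$, and $u_n:=(\lambda\,I+A_n)^{-1}f$ is the unique $u_n\in V_n$ with $\gotb(u_n,v)=(f,v)_H$ for all $v\in V_n$. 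Restricting the former identity to $v\in V_n$ and subtracting yields the Galerkin orthogonality $\gotb(u-u_n,v)=0$ for all $v\in V_n$, so for every $w\in V_n$
\[
c\,\|u_n-w\|_V^2\leq\RRe\gotb(u_n-w,u_n-w)=\RRe\gotb(u-w,u_n-w)\leq M\,\|u-w\|_V\,\|u_n-w\|_V ,
\]
whence $\|u-u_n\|_V\leq(1+M/c)\,\mathrm{dist}_V(u,V_n)$. Since $(V_n)$ is increasing and $\bigcup_nV_n$ is dense in $V$, the right-hand side tends to $0$; by continuity of the embedding, $u_n\to u$ in $H$, which is~(a).

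For~(b) I would identify the projections with orthogonal projections. From the description of the generator of a degenerate semigroup on a reflexive space recalled in Section~2, $\ker P=\ker(\lambda\,I+A)^{-1}$ and $\mathrm{range}\,P=\overline{\mathrm{range}(\lambda\,I+A)^{-1}}^{\,H}=\overline{\dom A}^{\,H}$. By the variational characterisation above, $(\lambda\,I+A)^{-1}f=0$ precisely when the corresponding solution vanishes, i.e.\ when $(f,v)_H=0$ for all $v\in V$; hence $\ker P=V^\perp=(\overline V^{\,H})^\perp$. Moreover $\dom A$ is a core of $\gota$, hence dense in $V$ and so in $\overline V^{\,H}$, giving $\mathrm{range}\,P=\overline V^{\,H}$. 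A bounded projection whose kernel is the orthogonal complement of its range equals the orthogonal projection onto that range; thus $P$ is the orthogonal projection of $H$ onto $\overline V^{\,H}$, and likewise $P_n$ is the orthogonal projection onto $\overline{V_n}^{\,H}$. The closed subspaces $\overline{V_n}^{\,H}$ increase, and their union is dense in $\overline V^{\,H}$ because $\bigcup_nV_n$ is $\|\cdot\|_V$-dense, hence $\|\cdot\|_H$-dense, in $V$; by the monotone convergence of orthogonal projections, $P_n\to P$ in $(\cl(H),\SOT)$. Together with~(a) and Lemma~\ref{pwsc202} this proves the theorem.

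I expect no single hard estimate; the care lies in the bookkeeping of the reduction --- verifying that the restricted forms $\gota_n$ stay closed and uniformly $m$-sectorial so that Lemmas~\ref{lwsc212} and~\ref{pwsc202} are applicable, choosing $\lambda$ so that $\gotb$ is coercive (the one place where closedness of $\gota$ and the embedding $V\hookrightarrow H$ are used), and, for the behaviour as $t\downarrow0$, correctly identifying $P$ and the $P_n$ with orthogonal projections onto $\overline V^{\,H}$ and $\overline{V_n}^{\,H}$. The hypothesis that $\bigcup_nV_n$ is dense in $V$ in the $V$-norm is what drives both halves, via $\mathrm{dist}_V(u,V_n)\to0$ and via the monotone convergence of the projections.
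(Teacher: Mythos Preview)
Your proof is correct. The overall architecture---reduce to strong resolvent convergence plus $P_n\to P$ in $\SOT$, then invoke Lemmas~\ref{lwsc212} and~\ref{pwsc202}---is exactly the paper's, and your treatment of~(b) (identifying $P_n$, $P$ as orthogonal projections onto $\overline{V_n}^{\,H}$, $\overline{V}^{\,H}$ and using monotone convergence of projections) matches the paper's, with a bit more detail supplied.

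The genuine difference is in~(a). The paper argues as follows: bound $(u_n)$ in $V$, extract a weak limit $u$ in $V$, identify $u=(\lambda I+A)^{-1}f$, and then upgrade weak to strong convergence in $H$ via
\[
\limsup_n \|u_n\|_H^2 \;=\; \RRe(f,u)_H - \liminf_n \RRe\gota(u_n) \;\le\; \RRe(f,u)_H - \RRe\gota(u) \;=\; \|u\|_H^2 ,
\]
using weak lower semicontinuity of $u\mapsto\RRe\gota(u)$ on $V$. You instead use Galerkin orthogonality and C\'ea's lemma to obtain the quantitative bound $\|u-u_n\|_V\le (1+M/c)\,\mathrm{dist}_V(u,V_n)\to 0$, which gives convergence in $V$ (hence in $H$). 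Your route is a little more elementary, yields a stronger conclusion (convergence in the form norm, with an explicit quasi-optimality constant), and avoids subsequences and weak compactness entirely. The paper's route, by contrast, is closer in spirit to the weak-to-strong upgrades that pervade the rest of the article (compare the proof of Theorem~\ref{twsc211}), and does not require the continuity bound $|\gotb(u,v)|\le M\|u\|_V\|v\|_V$---though that bound is of course automatic for closed sectorial forms.
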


\begin{proof}
Without loss of generality we may assume that the form $\gota$ is coercive, that is,
that there exists a $\mu > 0$ such that 
$\mu \, \|u\|_V^2 \leq \RRe \gota(u)$ for all $u \in V$.

Let $f \in H$.
Let $n \in \Ni$.
Set $u_n = (I + A_n)^{-1} f$.
Then $u_n \in V_n$ and 
\[
\gota(u_n,v) + (u_n,v)_H = (f,v)_H
\]
for all $v \in V_n$.
Choose $v = u_n$.
Then $\RRe \gota(u_n) + \|u_n\|_H^2 = \RRe (f,u_n)_H \leq \|f\|_H \, \|u_n\|_H$.
So $\|u_n\|_H \leq \|f\|_H$ and 
$\mu \, \|u_n\|_V^2 \leq \RRe \gota(u_n) \leq \|f\|_H^2$.
Therefore the sequence $(u_n)_{n \in \Ni}$ is bounded in $V$.
Passing to a subsequence if necessary, there exists a $u \in V$
such that $\lim_{n\to\infty} u_n = u$ weakly in $V$.
Let $m \in \Ni$ and $v \in V_m$.
Then 
\[
\gota(u_n,v) + (u_n,v)_H = (f,v)_H
\]
for all $n \in \Ni$ with $n \geq m$.
Take the limit $n \to \infty$.
Then 
\begin{equation}
\gota(u,v) + (u,v)_H = (f,v)_H
.
\label{etwsc301;1}
\end{equation}
Since $\bigcup_{n \in \Ni} V_n$ is dense in $V$ one deduces that 
(\ref{etwsc301;1}) is valid for all $v \in V$.
So $u \in D(A)$ and $u = (I + A)^{-1} f$.
Since
\begin{eqnarray*}
\limsup_{n \to \infty} \|u_n\|_H^2
& = & \limsup_{n \to \infty} \Big( \RRe (f,u_n)_H - \RRe \gota(u_n) \Big)  \\
& = & \RRe (f,u)_H - \liminf_{n \to \infty} \RRe \gota(u_n)  \\ 
& \leq & \RRe (f,u)_H - \RRe \gota(u)
= \|u\|_H^2
\end{eqnarray*}
it follows that $\lim_{n\to\infty} u_n = u$ in $H$.
So $\lim_{n\to\infty} (I + A_n)^{-1} = (I + A)^{-1}$ in $(\cl(H),\SOT)$. 
In particular, the four equivalent statements of Lemma \ref{lwsc212} hold. 

For every $n\in\Ni$ the limits 
\[
 P_n f := \lim_{t\downarrow 0} S_t^{(n)} f \text{ and } Pf := \lim_{t\downarrow 0} S_t f
\]
exist and define projections onto the closures (in $H$) of the domains of the graphs $A_n$ and $A$, 
respectively, and thus by \cite[Theorem~VI.2.1~ii)]{Ka80}
onto the closures of $V_n$ and $V$ in $H$.
Since the graphs $A_n$ and $A$ are associated with forms, one can easily see  
from their definition that the projections $P_n$ and $P$ are orthogonal. 
Using again that $(V_n)$ is increasing and that $\bigcup_{n\in\Ni} V_n$ is dense in $V$, 
we find that $\lim_{n\to\infty} P_n = P$ in $(\cl (H), \SOT )$. 
The claim follows from Lemma \ref{lwsc212} and Lemma \ref{pwsc202}.  
\end{proof}

The situation of Theorem \ref{twsc301} has a flavour of the situation of a monotonically 
decreasing sequence of forms, if one can speak of monotonicity in the context of sectorial forms. 
If the conclusion was strong resolvent convergence of the operators $A_n$, 
it may be seen as a generalisation of Simon \cite[Theorems 3.2 and 4.1]{Si78}. 
By Lemma \ref{lwsc212}, the strong resolvent convergence is equivalent to the 
convergence of the semigroups in the strong operator topology, uniformly in times from 
compact subsets of $(0,\infty )$. 
The uniform convergence up to $t=0$ is an additional feature of Theorem \ref{twsc301}. 
The situation of Theorem \ref{twsc301} is somewhat opposite to the situation of a 
monotonically increasing sequence of forms for which strong resolvent convergence of 
associated operators follows from Kato \cite[Theorem VIII.3.13a]{Ka80} and 
Simon \cite[Theorems 3.1 and 4.1]{Si78} in the symmetric case and from 
Batty \& ter Elst \cite{BaEl14} in a somewhat more general case of sectorial forms.

\section{Galerkin approximation} \label{Swsc3}

One popular situation in which an analytic $C_0$- or degenerate semigroup is approximated by degenerate 
semigroups arises in the numerical analysis of parabolic partial differential equations, namely in the Galerkin approximation, that is,  the space discretization via finite element spaces. It is not necessary to state a separate corollary for this situation since Theorem \ref{twsc301} is precisely designed for it. Given two Hilbert spaces 
$V$ and $H$ such that $V$ is continuously embedded in $H$, and given a closed, sectorial,
sesquilinear form $\gota \colon V \times V \to \Ci$, it suffices to chose an increasing sequence $(V_n)$ of
{\em finite dimensional} subspaces of $V$ (finite element spaces) such that
$\bigcup_{n \in \Ni} V_n$ is dense in $V$. Then Theorem \ref{twsc301} asserts that the semigroups generated by 
the graphs $A_n$ associated with the forms $\gota_n := \gota |_{V_n\times V_n}$ converge in the strong operator topology 
to the semigroup generated by the graph $A$ associated with $\gota$, uniformly for times 
in intervals of the form $(0,T]$. 

In some textbooks on linear and nonlinear analysis, the Galerkin approximation is used as a 
method of proof of existence of solutions of abstract elliptic, parabolic or hyperbolic equations.
The approximate solutions living in finite dimensional subspaces of the energy space usually 
fulfill some a priori estimates, that is, they live in bounded subsets of the form domain or 
in a function space with values in the form domain.
Then an argument using weak compactness allows one to find limit points (in a weak topology), 
and these limit points are shown to be solutions of the original equation.
The question of convergence of the approximate solutions in a norm topology is, 
however, not systematically discussed.
In Dr\'abek--Milota \cite[Proposition 7.2.41]{DrMi13} 
(see also Evans \cite[Theorem 2 in Chapter~2]{Eva90}, 
the strong convergence of approximate solutions of a (nonlinear) stationary problem is 
explicitly stated, while this is not done in the case of approximate solutions of a 
(nonlinear) gradient system in reflexive spaces \cite[Theorem 8.2.5]{DrMi13}.
A statement on strong convergence is also missing in 
Evans \cite[Theorem 3 in \S7.1.2]{Eva98} 
in the context of an abstract {\em linear} parabolic problem, even under 
restrictive assumptions on the finite elements.
Yet, the strong convergence of approximate solutions in the Galerkin 
approximation of abstract parabolic equations is known and Theorem \ref{twsc301} 
is not new in this context: 
see the monograph by Dautray--Lions 
\cite[Remark 5 in Section XVIII.3, p.\ 520]{DaLi92V} with 
uniform convergence in time on bounded intervals of $(0,\infty)$ or the survey by 
Fujita--Suzuki \cite[Theorem 7.1]{FuSu91} with 
uniform convergence in time on compact intervals of $(0,\infty)$, 
in order to mention only two references.

Note, however, that the subspaces $V_n$ in Theorem~\ref{twsc301} do 
not have to be finite dimensional.

\section{Elliptic and parabolic problems on varying domains} \label{Swsc4}

In this section we illustrate Theorems~\ref{twsc211} and \ref{twsc301} by considering a 
sequence of diffusion equations on 
varying open sets $\Omega_n$ which converge monotonically from below to an open set $\Omega$.
We provide new proofs for the next results, which have been 
studied also in Simon \cite[Example 1 and Theorem 4.1 in Section 4]{Si78}. 
In the following, given an open set $\Omega\subseteq\Ri^d$, we consider the Sobolev space 
$H^1_0 (\Omega )$ as a subspace of $H^1 (\Ri^d )$ 
by identifying functions in $H^1_0 (\Omega )$ with functions in $H^1 (\Ri^d)$ which are equal to 
$0$ almost everywhere on $\Ri^d \setminus \Omega$, and similarly we consider the space 
$L_2 (\Omega )$ as a subspace of $L_2 (\Ri^d )$.

\begin{thm} \label{thm.varying}
Let $(\Omega_n)_{n \in \Ni}$ be an increasing sequence of open subsets of $\Ri^d$
and let $\Omega := \bigcup_{n\in\Ni} \Omega_n$.
Let $a\in L_\infty (\Omega ; \Ci^{d\times d} )$ be uniformly elliptic in the sense that there exists $\eta >0$ such that
\[
 \sum_{i,j=1}^d a_{ij}(x) \xi_j \, \overline{\xi_i} \geq \eta \, |\xi |^2 
\text{ for every } \xi\in\Ci^d \mbox{ and } x\in\Omega .
\]
For all $n \in \Ni$ let $u_{0,n}\in L_2 (\Omega_n )\subseteq L_2 (\Omega )$ and $u_0 \in L_2 (\Omega )$.
Further, for all $n \in \Ni$ let $u_n \in C([0,\infty) ; L_2 (\Omega_n ))$ and $u \in C([0,\infty) ; L_2 (\Omega ))$ 
be the solutions of the diffusion equations
\[
\begin{array}{r@{}c@{}ll}
  \partial_t u_n - {\rm div}\, (a(x)\nabla u_n) & {} = {} & 0 & \mbox{ in } (0,\infty ) \times \Omega_n , \\[5pt]
  u_n & = & 0  & \mbox{ on } (0,\infty ) \times \partial\Omega_n , \\[5pt]
  u_n (0,\cdot ) & = & u_{0,n} & \mbox{ in } \Omega_n ,
\end{array}
\]
and
\[
\begin{array}{r@{}c@{}ll}
  \partial_t u - {\rm div}\, (a(x)\nabla u ) & {} = {} & 0 & \mbox{ in } (0,\infty ) \times \Omega , \\[5pt]
  u & = & 0  & \mbox{ on } (0,\infty ) \times \partial\Omega , \\[5pt]
  u (0,\cdot ) & = & u_{0} & \mbox{ in } \Omega ,
\end{array}
\]
respectively. If $\lim_{n\to\infty} \| u_{0,n} - u_0 \|_{L_2 (\Ri^d )} = 0$, then
\[
 \lim_{n\to\infty} \sup_{t\in [0 ,T]} \| u_n(t) - u(t)\|_{L_2 (\Ri^d )} = 0.
\]
\end{thm}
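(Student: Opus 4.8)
The plan is to recognise Theorem~\ref{thm.varying} as an instance of Theorem~\ref{twsc301}, so that most of the work lies in setting up the correct functional-analytic framework and in translating the PDE statement into the abstract semigroup language. First I would fix the ambient Hilbert space $H = L_2(\Ri^d)$ and the form domain $V = H^1_0(\Omega)$, viewed as a closed subspace of $H^1(\Ri^d)$; the continuous embedding $V \hookrightarrow H$ is clear. On $V$ define the sectorial (in fact coercive, after the harmless shift used in the proof of Theorem~\ref{twsc301}) form
\[
 \gota(u,v) = \int_\Omega \sum_{i,j=1}^d a_{ij}(x)\, \partial_j u \, \overline{\partial_i v} \, dx ,
\]
which is closed and sectorial by uniform ellipticity together with $a \in L_\infty$. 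Its associated $m$-sectorial graph $A$ in $H$ is $-\divv(a\nabla \cdot)$ with Dirichlet boundary conditions on $\Omega$, and $-A$ generates the analytic degenerate semigroup $S$ realising the solution operator $u(t) = S_t u_0$ of the second diffusion problem; this is the standard $L_2$-theory for parabolic equations in variational form, which I would simply cite.

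Next I would identify the finite- (or infinite-) rank subspaces of Theorem~\ref{twsc301} with $V_n := H^1_0(\Omega_n)$. Because $(\Omega_n)$ is increasing, $(V_n)$ is an increasing sequence of closed subspaces of $V$, and $\gota_n := \gota|_{V_n \times V_n}$ is exactly the Dirichlet form on $\Omega_n$, whose associated graph $A_n$ generates the semigroup $S^{(n)}$ solving the first diffusion problem, so $u_n(t) = S^{(n)}_t u_{0,n}$. The one genuine point to verify here is the density hypothesis $\overline{\bigcup_n V_n} = V$ in $V = H^1_0(\Omega)$: this is the classical fact that $H^1_0$ of an increasing union of open sets is the closure of the union of the $H^1_0$'s, which one proves by a cutoff/truncation argument on functions in $C_c^\infty(\Omega)$ (any such function has compact support inside some $\Omega_n$, and $C_c^\infty(\Omega)$ is dense in $H^1_0(\Omega)$). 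I expect this density step, though standard, to be the main obstacle in the sense that it is the only place where the specific geometry of the $\Omega_n$ enters.

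With these identifications in place, Theorem~\ref{twsc301} immediately yields
\[
 \lim_{n\to\infty} \sup_{t\in(0,T]} \|(S^{(n)}_t - S_t) f\|_{L_2(\Ri^d)} = 0
\]
for every $f \in L_2(\Ri^d)$ and every $T > 0$. It remains to absorb the varying initial data: write
\[
 \|u_n(t) - u(t)\|_H = \|S^{(n)}_t u_{0,n} - S_t u_0\|_H
   \leq \|S^{(n)}_t (u_{0,n} - u_0)\|_H + \|(S^{(n)}_t - S_t) u_0\|_H
   \leq M\, \|u_{0,n} - u_0\|_H + \|(S^{(n)}_t - S_t) u_0\|_H ,
\]
where $M = \sup_{n, \, s\in(0,T]} \|S^{(n)}_s\| < \infty$ by the uniform contractivity (the forms are accretive, so the semigroups are contractive after the shift, hence uniformly bounded on $[0,T]$). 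Taking $\sup_{t\in[0,T]}$, the first term tends to $0$ by hypothesis and the second by Theorem~\ref{twsc301}; at $t=0$ both sides reduce to $\|u_{0,n}-u_0\|_H$ since $u_0 \in \overline{V}^H$ with $P u_0 = u_0$ (and likewise $P_n u_{0,n} = u_{0,n}$), so nothing is lost by including the endpoint. This gives the claimed uniform convergence on $[0,T]$ and completes the proof.
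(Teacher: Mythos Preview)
Your proposal is correct and follows essentially the same route as the paper: identify $V=H^1_0(\Omega)$, $V_n=H^1_0(\Omega_n)$, observe that $\gota_n$ is the restriction of $\gota$, verify density of $\bigcup_n V_n$ in $V$ via the compact-support argument for $C_c^\infty(\Omega)$, and invoke Theorem~\ref{twsc301}. You are in fact more explicit than the paper about absorbing the varying initial data via the triangle inequality and the uniform bound on $S^{(n)}$, a step the paper leaves to the reader.
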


\begin{proof}
For all $n \in \Ni$ we consider the sectorial sesquilinear form 
$\gota_n \colon H^1_0 (\Omega_n ) \times H^1_0 (\Omega_n) \to \Ci$ defined by 
$\gota_n (u,v) = \int_{\Omega_n} (a(x) \nabla u) \cdot \overline{\nabla v}$
and we denote by $A_n$ the sectorial graph on $L_2 (\Omega )$ associated to~$\gota_n$.
Similarly, we define the form $\gota$ on $H^1_0 (\Omega )$ and the 
associated operator $A$, by replacing $\Omega_n$ by $\Omega$ in the above definition.
Observe that $\gota_n$ is the restriction of the form $\gota$ to the space $H^1_0 (\Omega_n )$. 
Observe in addition that $u_n(t) = S^{(n)}_t u_{0,n}$ for all $t > 0$, where $S^{(n)}$ is the 
semigroup generated by $-A_n$, and similarly $u(t)=S_t u_0$ for all $t > 0$, 
where $S$ is the semigroup generated by $-A$.

Since the sequence $(\Omega_n )$ is monotonically increasing to $\Omega$, it is easy to see
that $C_c^\infty (\Omega )$ is a subspace of $\bigcup_n H^1_0 (\Omega_n )$
and thus the latter space
is dense in $H^1_0 (\Omega )$. The claim then follows from Theorem \ref{twsc301}.
\end{proof}

As a consequence of Theorem \ref{thm.varying} and Lemma \ref{lwsc212}, we obtain strong convergence of solutions of elliptic problems, that is, convergence of resolvents in the strong operator topology. 

\begin{cor} \label{cor.varying} 
 Let $(\Omega_n)_{n \in \Ni}$ be an increasing sequence of open subsets of $\Ri^d$
and let $\Omega := \bigcup_{n\in\Ni} \Omega_n$.
Fix $\lambda >0$ and $f\in L_2 (\Omega )$. 
For all $n \in \Ni$ let $u_n\in H^1_0 (\Omega_n)$ be the 
weak solution of the Dirichlet problem
\[
\begin{array}{r@{}c@{}ll}
  \lambda u_n - \Delta u_n &{} = {} & f & \mbox{ in } \Omega_n , \\[5pt]
  u_n & = & 0 & \mbox{ on } \partial\Omega_n . 
\end{array}
\]
Further, let $u\in H^1_0 (\Omega )$ be the weak solution of 
\[
\begin{array}{r@{}c@{}ll}
  \lambda u - \Delta u & {} = {} & f & \mbox{ in } \Omega , \\[5pt]
  u & = & 0 & \mbox{ on } \partial\Omega . 
\end{array}
\]
Then $\lim_{n\to\infty} \| u_n - u \|_{L_2 (\Ri^d )} = 0$. 
\end{cor}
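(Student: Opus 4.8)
The plan is to identify the weak solutions $u_n$ and $u$ with values of the resolvents of the graphs appearing in (the proof of) Theorem~\ref{thm.varying}, and then to quote the strong resolvent convergence that is already contained there.

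First I would specialise the setting of Theorem~\ref{thm.varying} to the case where $a$ is the $d\times d$ identity matrix, which is uniformly elliptic with $\eta=1$ and satisfies $-\divv(a\nabla\cdot)=-\Delta$. Following the proof of that theorem, put $\gota(u,v)=\int_\Omega \nabla u\cdot\overline{\nabla v}$ on $V:=H^1_0(\Omega)$ and $\gota_n=\gota|_{H^1_0(\Omega_n)\times H^1_0(\Omega_n)}$, and let $A_n$, $A$ be the associated $m$-sectorial graphs on $H:=L_2(\Omega)$ (regarded, as in the statement, as a subspace of $L_2(\Ri^d)$). These forms are closed, sectorial with vertex zero, and uniformly sectorial, since each $\gota_n$ is merely the restriction of $\gota$ to the closed subspace $H^1_0(\Omega_n)$ of $V$; moreover $\bigcup_n H^1_0(\Omega_n)\supseteq C_c^\infty(\Omega)$ is dense in $H^1_0(\Omega)$ because $(\Omega_n)$ increases to $\Omega$, so we are exactly in the situation of Theorem~\ref{twsc301}. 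Unwinding the definition of the weak solution of the Dirichlet problem, $u_n$ is the unique element of $H^1_0(\Omega_n)$ with $\gota_n(u_n,v)+\lambda\,(u_n,v)_H=(f,v)_H$ for all $v\in H^1_0(\Omega_n)$ --- here one uses that a test function $v$ vanishes off $\Omega_n$, so that $\int_{\Omega_n} f\,\overline v=(f,v)_{L_2(\Omega)}$ --- whence $u_n=(\lambda I+A_n)^{-1}f$, and likewise $u=(\lambda I+A)^{-1}f$.

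Next I would invoke Theorem~\ref{twsc301}, which is exactly the mechanism behind Theorem~\ref{thm.varying}: its proof shows $\lim_{n\to\infty}(I+A_n)^{-1}=(I+A)^{-1}$ in $(\cl(H),\SOT)$ and notes that consequently the four equivalent statements of Lemma~\ref{lwsc212} hold for the graphs $A_n$, $A$; in particular statement~\ref{lwsc212-3}, i.e.\ $\lim_{n\to\infty}(\mu I+A_n)^{-1}=(\mu I+A)^{-1}$ in $(\cl(H),\SOT)$ for every $\mu\in\Ci$ with $\RRe\mu>0$. Evaluating at $\mu=\lambda$ and applying both sides to $f$ gives $\lim_{n\to\infty}\|u_n-u\|_{L_2(\Omega)}=0$, which is the assertion, since the $L_2(\Omega)$- and $L_2(\Ri^d)$-norms agree on functions vanishing off $\Omega$.

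The only non-formal point is the identification of the PDE weak solution with the resolvent of the form graph, together with the bookkeeping of the chain of embeddings $L_2(\Omega_n)\subseteq L_2(\Omega)\subseteq L_2(\Ri^d)$; all the analytic work has already been carried out in Theorem~\ref{twsc301} and Lemma~\ref{lwsc212}, so that no new compactness or convergence argument is required.
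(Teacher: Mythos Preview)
Your proof is correct and follows essentially the same route as the paper: the corollary is stated as a consequence of Theorem~\ref{thm.varying} and Lemma~\ref{lwsc212}, and you unpack this by identifying $u_n=(\lambda I+A_n)^{-1}f$, $u=(\lambda I+A)^{-1}f$ and then reading off strong resolvent convergence from Theorem~\ref{twsc301} (the engine behind Theorem~\ref{thm.varying}) together with Lemma~\ref{lwsc212}. The only cosmetic difference is that you cite Theorem~\ref{twsc301} directly rather than Theorem~\ref{thm.varying}, which short-circuits the detour through semigroup convergence but is otherwise the same argument.
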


There is a characterisation for convergence of resolvents of the Dirichlet 
Laplacian or resolvents of more general operators as in Theorem~\ref{thm.varying}
in the strong operator topology.
One defines that a sequence $(H^1_0 (\Omega_n ))_{n \in \Ni}$
of Sobolev spaces converges in the sense of Mosco \cite{Mosco1969}
if the following two conditions are valid:
\begin{itemize}
 \item[(i)] 
if $u_n \to u$ weakly in $H^1 (\Ri^d )$ and $u_n\in H^1_0 (\Omega_n)$ for all $n \in \Ni$, 
then $u\in H^1_0 (\Omega )$, and
 \item[(ii)] for every $u\in H^1_0 (\Omega )$ there exists a sequence $(u_n)$ with 
$u_n\in H^1_0 (\Omega_n )$ for all $n \in \Ni$ and $u_n\to u$ in $H^1 (\Ri^d )$;
\end{itemize}
see, for example, \cite[Assumption 6.2]{Da05}. 
Then convergence of resolvents in the strong operator topology is valid if and only if 
the sequence $(H^1_0 (\Omega_n ))$ converges to $H^1_0 (\Omega )$ in the sense of Mosco,
see Daners \cite[Theorem~5.3]{Da03} for the 
implication `$\Leftarrow$' and for the implication `$\Rightarrow$' see 
Attouch \cite[Theorem~3.26]{Att84} or Mosco \cite[Theorem~2.4.1]{Ms94}.

Let us sketch a proof of the implication that Mosco convergence of the Sobolev spaces 
implies strong resolvent convergence. 
In fact, it is not difficult to prove the convergence 
$(\lambda I+A_n)^{-1} \to (\lambda I+A)^{-1}$ in the weak operator topology whenever 
$\lambda >0$. 
Similarly, $(\lambda I+R_n)^{-1} \to (\lambda I+R)^{-1}$ in the 
weak operator topology, where $R_n$ and $R$ are the operators associated with the 
real parts $\Re\gota_n$ and $\Re\gota$, respectively, simply because the latter 
sesquilinear forms have a similar structure as the forms $\gota_n$ and $\gota$ and the 
same arguments apply. 
Once, the two convergences in the weak operator topology are shown, 
the convergence in the strong operator topology follows from Theorem \ref{twsc211}.

\smallskip

Of course, by Lemma \ref{lwsc212} again, the convergence of resolvents in the 
strong operator topology implies convergence of the semigroups in the strong operator topology, 
uniformly in time intervals of the form $[\delta ,T]$ with $\delta, T>0$. 
This is weaker than the convergence of the semigroups in the strong operator topology, 
uniformly in time intervals of the form $(0,T]$, 
as stated in Theorem \ref{thm.varying}. 
In the situation of merely Mosco convergence of the spaces $H^1_0 (\Omega_n )$ 
to $H^1_0 (\Omega )$, one cannot expect uniform convergence on $(0,T]$ in general, 
as the example from \cite[Example~6.7]{Da05} shows.
  
The problem of stability of solutions of elliptic equations with respect to the 
domain has been studied extensively in the works by Bucur \cite{Bu99}, 
Bucur \& Butazzo \cite{BuBu02}, Bucur \& Varchon \cite{BuVa00b}, Daners \cite{Da03}, 
Daners, Hauer \& Dancer \cite{DaHaDa15}, Arrieta \& Barbatis \cite{ArBa14}, 
Arendt \& Daners \cite{ArDa07}, \cite{ArDa08}, Biegert \& Daners \cite{BiDa06}, 
Dal Maso \& Toader \cite{DMTo96}, Sa Ngiamsunthorn \cite{SN12}, \cite{SN12a} and
Mugnolo, Nittka \& Post \cite{MuNiPo13}.

\section{Homogenization on (unbounded) open sets} \label{Swsc5}

We next illustrate Theorem \ref{twsc201} and consider the classical problem of 
homogenization of second-order elliptic operators with periodic coefficients.

For all $k$, $l \in \{ 1,\ldots,d \} $ let $c_{kl} \colon \Ri^d \to \Ri$ be 
measurable and bounded.
Suppose that these coefficients are
\begin{itemize}
\item[(i)] symmetric, that is, $c_{kl} = c_{lk}$ for all $k$, $l \in \{ 1,\ldots,d \}$,
\item[(ii)] periodic, that is, $c_{kl}(x+\gamma) = c_{kl}(x)$ for all 
   $k$, $l \in \{ 1,\ldots,d \} $, $x \in \Ri^d$ and $\gamma \in \Zi^d$, and 
\item[(iii)] uniformly elliptic, that is, there exists a $\mu > 0$ such that
   $\sum_{k,l=1}^d c_{kl}(x) \, \xi_k \, \overline{\xi_l} \geq \mu \, |\xi|^2$ for all 
   $x \in \Ri^d$ and $\xi \in \Ci^d$.
\end{itemize}
For all $\varepsilon > 0$ and $k$, $l \in \{ 1,\ldots,d \}$ 
define $c^{(\varepsilon)}_{kl} \colon \Ri^d \to \Ri$ by 
$c^{(\varepsilon)}_{kl}(x) = c_{kl}(\frac{1}{\varepsilon} \, x)$.
Let $\Omega \subseteq \Ri^d$ be open.
We emphasise that we do not assume that $\Omega$ is bounded.
Let $V$ be a closed subspace of $H^1(\Omega)$ which contains
$C_c^\infty(\Omega)$.
For all $\varepsilon > 0$ let $A_\varepsilon$ be the self-adjoint
operator in $L_2(\Omega)$ associated to the form $\gota_\varepsilon \colon V\times V \to \Ci$
defined by
\[
\gota_\varepsilon(u,v) 
= \int_\Omega \sum_{k,l=1}^d c^{(\varepsilon)}_{kl} \, (\partial_k u) \, \overline{\partial_l v} .
\]
We shall prove that there exists a positive self-adjoint operator $\widehat A$ in 
$L_2(\Omega)$ such that 
\[
\lim_{\varepsilon \downarrow 0} (\lambda \, I + A_\varepsilon)^{-1}
= (\lambda \, I + \widehat A)^{-1}
\]
in the strong operator topology for all $\lambda \in \Ci$ with $\RRe \lambda > 0$.
In fact, we determine the operator $\widehat{A}$ and we only need to prove convergence in the 
weak operator topology.

An explicit description of $\widehat A$ is as follows.
Consider the space $H^1_{\rm per}(\Ri^d)$ of all 
functions $u\in H^1_\loc(\Ri^d)$ which satisfy $u(x+\gamma) = u(x)$ 
for a.e.\ $x \in \Ri^d$ and all $\gamma \in \Zi^d$.
For all $j \in \{ 1,\ldots,d \} $ there exists a $\chi_j \in H^1_{\rm per}(\Ri^d)$
such that 
\begin{equation}
\int_{[0,1]^d} \sum_{k,l=1}^d c_{kl} \, (\partial_k \chi_j) \, \overline{\partial_l v}
= - \int_{[0,1]^d} \sum_{k,l=1}^d c_{jl} \, \overline{\partial_l v}
\label{etwsc405;4}
\end{equation}
for all $v \in H^1_{\rm per}(\Ri^d)$.
Then $\chi_j \in L_\infty(\Ri^d)$ by Stampacchia \cite[Teorema~4.1]{St60}
since $c_{jl} \in L_p$ with 
$p > d$.
For all $k$, $l \in \{ 1,\ldots,d \} $ define 
\[
\hat c_{kl}
= \int_{[0,1]^d} c_{kl}
   - \sum_{j=1}^d \int_{[0,1]^d} c_{kj} \, \partial_j \chi_l
.  \]
It follows from Bensoussan, Lions \& Papanicolau \cite[Remark~1.2.6]{BeLiPa78} 
that there exists a $\mu' > 0$
such that 
$\sum_{k,l=1}^d \hat c_{kl} \, \xi_k \, \overline{\xi_l} \geq \mu' \, |\xi|^2$
for all $\xi \in \Ci^d$.
Let $\widehat A$ be the operator in $L_2(\Omega)$ associated to the form
$\gota \colon V\times V \to\Ci$ defined by
\[
\gota(u,v) 
= \int_\Omega \sum_{k,l=1}^d \hat c_{kl} \, (\partial_k u) \, \overline{\partial_l v} .
\]
The alluded theorem is the following.

\begin{thm} \label{twsc405}
Let $\lambda \in \Ci$ with $\RRe \lambda > 0$.
Then
\[
\lim_{\varepsilon \downarrow 0} (\lambda \, I + A_\varepsilon)^{-1}
= (\lambda \, I + \widehat A)^{-1} \text{ in } (\cl(H),\SOT) .
\]
\end{thm}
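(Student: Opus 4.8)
The strategy is to reduce the assertion to weak resolvent convergence and then to invoke Theorem~\ref{twsc201}. Since the coefficients $c_{kl}$ are real, symmetric and uniformly elliptic, each form $\gota_\varepsilon$ is symmetric and nonnegative, so the associated graph $A_\varepsilon$ is positive self-adjoint; likewise $\widehat A$ is positive self-adjoint, because the constant matrix $\widehat c := (\hat c_{kl})_{k,l=1}^d$ is real, symmetric and, by the quoted bound involving $\mu'$, positive definite. Convergence as $\varepsilon \downarrow 0$ may be tested along arbitrary sequences $\varepsilon_n \downarrow 0$, and for each such sequence Theorem~\ref{twsc201}, applied with $A_n := A_{\varepsilon_n}$ and $A := \widehat A$, shows that Statements~\ref{twsc201-5.4} and \ref{twsc201-5} are equivalent. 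Hence it suffices to prove that $(\lambda\,I + A_\varepsilon)^{-1} \to (\lambda\,I + \widehat A)^{-1}$ in $(\cl(L_2(\Omega)),\WOT)$ as $\varepsilon \downarrow 0$, for every $\lambda \in \Ci$ with $\RRe \lambda > 0$. Fix such a $\lambda$ and an $f \in L_2(\Omega)$, and put $u_\varepsilon := (\lambda\,I + A_\varepsilon)^{-1} f \in V$, so that $\gota_\varepsilon(u_\varepsilon, v) + \lambda\,(u_\varepsilon,v)_{L_2(\Omega)} = (f,v)_{L_2(\Omega)}$ for all $v \in V$.

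Next I would derive uniform bounds and extract weak limits. Taking $v = u_\varepsilon$ and real parts, uniform ellipticity gives $(\RRe \lambda)\,\|u_\varepsilon\|_{L_2(\Omega)}^2 + \mu\,\|\nabla u_\varepsilon\|_{L_2(\Omega)}^2 \le \|f\|_{L_2(\Omega)} \|u_\varepsilon\|_{L_2(\Omega)}$, so $(u_\varepsilon)$ is bounded in $H^1(\Omega)$ and the fluxes $\xi_\varepsilon := c^{(\varepsilon)} \nabla u_\varepsilon$ (the vector field with $k$-th component $\sum_l c^{(\varepsilon)}_{kl} \partial_l u_\varepsilon$) are bounded in $L_2(\Omega)^d$. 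Along a subsequence, $u_\varepsilon \rightharpoonup u$ weakly in $H^1(\Omega)$ --- hence $u \in V$, as $V$ is a closed and thus weakly closed subspace --- and $\xi_\varepsilon \rightharpoonup \xi$ weakly in $L_2(\Omega)^d$, while $u_\varepsilon \to u$ strongly in $L_{2,\loc}(\Omega)$ by the Rellich theorem applied on balls. Passing to the limit in the weak formulation, for every fixed $v \in V$ one obtains
\[
\int_\Omega \xi \cdot \overline{\nabla v} \, + \, \lambda\,(u,v)_{L_2(\Omega)} = (f,v)_{L_2(\Omega)} ,
\]
while the equation for $u_\varepsilon$ reads $\divv \xi_\varepsilon = \lambda u_\varepsilon - f$ in the sense of distributions, a sequence bounded in $L_2(\Omega)$. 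It remains to identify $\xi = \widehat c\, \nabla u$.

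This identification is the core of homogenization and is carried out by Tartar's method of oscillating test functions. Writing $e_1, \dots, e_d$ for the standard basis of $\Ri^d$, introduce for each $j$ the function $w^{(\varepsilon)}_j(x) := x_j + \varepsilon\, \chi_j(x/\varepsilon)$ (with the sign of the correction fixed by the convention in~(\ref{etwsc405;4})). Since $\chi_j$ is periodic and, by the quoted result of Stampacchia, lies in $L_\infty(\Ri^d)$, one has $w^{(\varepsilon)}_j \to x_j$ locally uniformly, $\nabla w^{(\varepsilon)}_j = e_j + (\nabla \chi_j)(\cdot/\varepsilon) \rightharpoonup e_j$ and $c^{(\varepsilon)} \nabla w^{(\varepsilon)}_j \rightharpoonup \widehat c\, e_j$ weakly in $L_{2,\loc}(\Ri^d)^d$, the weak limits being the cell averages and the last being $\widehat c\, e_j$ precisely by the definition of $\hat c_{kl}$; moreover (\ref{etwsc405;4}) says exactly that $\divv(c^{(\varepsilon)} \nabla w^{(\varepsilon)}_j) = 0$. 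By symmetry of $c^{(\varepsilon)}$ there is the pointwise identity $\xi_\varepsilon \cdot \nabla w^{(\varepsilon)}_j = \nabla u_\varepsilon \cdot (c^{(\varepsilon)} \nabla w^{(\varepsilon)}_j)$. Applying the div--curl lemma of compensated compactness to the left-hand side (where $\divv \xi_\varepsilon$ is bounded in $L_2$ and $\nabla w^{(\varepsilon)}_j$ is curl-free) and to the right-hand side (where $\nabla u_\varepsilon$ is curl-free and $\divv(c^{(\varepsilon)} \nabla w^{(\varepsilon)}_j) = 0$) yields, in the sense of distributions on $\Omega$, the identity $\xi \cdot e_j = \nabla u \cdot (\widehat c\, e_j)$ for every $j$, that is $\xi = \widehat c\, \nabla u$ by symmetry of $\widehat c$. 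Substituting into the limit equation gives $\gota(u,v) + \lambda\,(u,v)_{L_2(\Omega)} = (f,v)_{L_2(\Omega)}$ for all $v \in V$, hence $u = (\lambda\,I + \widehat A)^{-1} f$. Since this limit does not depend on the chosen subsequence, the whole family converges, $u_\varepsilon \rightharpoonup u$ in $L_2(\Omega)$, and by the reduction above Theorem~\ref{twsc201} upgrades this to convergence in $(\cl(L_2(\Omega)),\SOT)$ for all $\lambda$ with $\RRe \lambda > 0$.

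The only genuinely non-routine step is the identification $\xi = \widehat c\, \nabla u$. The obstruction is that the weak limit of the product $c^{(\varepsilon)} \nabla u_\varepsilon$ of two merely weakly convergent, highly oscillatory quantities is not the product of their weak limits; the compensation that saves the argument comes from the bound on $\divv \xi_\varepsilon$, the curl-freeness of $\nabla u_\varepsilon$, the periodic correctors $\chi_j$ and the precise formula for $\widehat c$ in the statement. Everything else --- the a priori estimates, the weak compactness, and the passage from weak to strong resolvent convergence --- is standard or has already been established in Theorem~\ref{twsc201}.
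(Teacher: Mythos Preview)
Your proof is correct and follows the same strategy as the paper: establish weak resolvent convergence via Tartar's method of oscillating test functions built from the periodic correctors~$\chi_j$, and then invoke Theorem~\ref{twsc201} to upgrade to strong resolvent convergence. The only difference is in packaging: you appeal to the div--curl lemma as a black box to identify the weak limit of the flux, while the paper carries out the same compensated-compactness argument by hand, testing the equation against $\varphi\,(\pi_i - \varepsilon_n \chi_i^{(n)})$ and using Lemma~\ref{lwsc406} on weak limits of periodic rescalings together with local Rellich compactness; your separate appeal to Rellich for $u_\varepsilon\to u$ in $L_{2,\loc}$ is therefore not actually needed in your version.
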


\begin{proof}
Let $f \in L_2(\Omega)$ and let $\lambda\in\Ci$ with $\RRe \lambda >0$.
Let $(\varepsilon_n)$ be a sequence of positive real numbers such that 
$\lim_{n\to\infty} \varepsilon_n = 0$.
Let $n \in \Ni$.
Set $u_n = (\lambda \, I + A_{\varepsilon_n})^{-1} f$.
Then 
\begin{equation} 
\int_\Omega \sum_{k,l=1}^d c^{(\varepsilon_n)}_{kl} \, (\partial_k u_n) \, \overline{\partial_l v}
   + \lambda \int_\Omega u_n \, \overline v
= \int_\Omega f \, \overline v
\label{etwsc405;1}
\end{equation}
for all $v \in V$.
Choosing $v = u_n$ gives
\[
\mu \int_\Omega |\nabla u_n|^2 
   + (\RRe \lambda) \int_\Omega |u_n|^2
\leq \RRe \int_\Omega f \, \overline{u_n}
\leq \|f\|_{L_2(\Omega)} \, \|u_n\|_{L_2(\Omega)}
.  \]
So $\|u_n\|_{L_2(\Omega)} \leq (\RRe \lambda)^{-1} \, \|f\|_{L_2(\Omega)}$
and $\mu \int_\Omega |\nabla u_n|^2 \leq (\RRe \lambda)^{-1} \, \|f\|_{L_2(\Omega)}^2$.
Hence the sequence $(u_n)_{n \in \Ni}$ is bounded in $V$.
For all $k \in \{ 1,\ldots,d \} $ and $n \in \Ni$ define 
\[
w_{kn} = \sum_{l=1}^d c^{(\varepsilon_n)}_{kl} \, \partial_l u_n
.  \]
Then the sequence $(w_{kn})_{n \in \Ni}$ is bounded in $L_2(\Omega)$
for all $k \in \{ 1,\ldots,d \} $.
Passing to a subsequence, if necessary, there exist $u \in V$ and 
$w_1,\ldots,w_d \in L_2(\Omega)$ such that 
$\lim_{n\to\infty} u_n = u$ weakly in $V$ and $\lim_{n\to\infty} w_{kn} = w_k$ weakly 
in $L_2(\Omega)$ for all $k \in \{ 1,\ldots,d \} $.

By (\ref{etwsc405;1}) one has 
\[
\sum_{k=1}^d \int_\Omega w_{kn} \, \overline{\partial_k v}
   + \lambda \int_\Omega u_n \, \overline v
= \int_\Omega f \, \overline v
\]
for all $v \in V$ and $n \in \Ni$.
Take the limit $n \to \infty$.
Then 
\begin{equation}
\sum_{k=1}^d \int_\Omega w_k \, \overline{\partial_k v}
   + \lambda \int_\Omega u \, \overline v
= \int_\Omega f \, \overline v
\label{etwsc405;2}
\end{equation}
for all $v \in V$.
We next determine the $w_i$, which requires some work.

Let $i \in \{ 1,\ldots,d \} $.
Let $\varphi \in C_c^\infty(\Omega)$.
For all $n \in \Ni$ define $\chi^{(n)}_i \in H^1_\loc(\Ri^d)$
by $\chi^{(n)}_i(x) = \chi_i(\frac{1}{\varepsilon_n} \, x)$.
We denote by $\pi_k \colon \Ri^d \to \Ri$ the $k$-th coordinate function
for all $k \in \{ 1,\ldots,d \} $.
Let $n \in \Ni$.
Then $\varphi(\pi_i - \varepsilon_n \chi^{(n)}_i) \in H^1_0(\Omega) \subseteq V$.
Moreover, (\ref{etwsc405;1}) gives
\begin{eqnarray}
\int_\Omega f \, \overline{\varphi(\pi_i - \varepsilon_n \chi^{(n)}_i)}
& = & \int_\Omega \sum_{l=1}^d w_{ln} \, 
     \overline{\partial_l (\varphi(\pi_i - \varepsilon_n \chi^{(n)}_i))} 
   + \lambda \int_\Omega u_n \, \overline \varphi \, (\pi_i - \varepsilon_n \chi^{(n)}_i) 
   \nonumber  \\
& = & \int_\Omega \sum_{l=1}^d w_{ln} \, 
     \overline{\partial_l \varphi} \, (\pi_i - \varepsilon_n \chi^{(n)}_i)
   + \int_\Omega \sum_{l=1}^d w_{ln} \, 
     \overline{\varphi} \, \partial_l (\pi_i - \varepsilon_n \chi^{(n)}_i)  \nonumber \\*
& & \hspace*{60mm} {}
   + \lambda \int_\Omega u_n \, \overline \varphi \, (\pi_i - \varepsilon_n \chi^{(n)}_i) 
.  \hspace*{10mm}
\label{etwsc405;3}
\end{eqnarray}
The second term on the right hand side of (\ref{etwsc405;3}) can be rewritten as
\begin{eqnarray*}
\lefteqn{
\int_\Omega \sum_{l=1}^d w_{ln} \, 
     \overline{\varphi} \, \partial_l (\pi_i - \varepsilon_n \chi^{(n)}_i)
} \hspace*{10mm} \\*
& = & \sum_{k,l=1}^d \int_\Omega c^{(\varepsilon_n)}_{kl} \, (\partial_k u_n) \, 
   \overline{\varphi}  \, \partial_l (\pi_i - \varepsilon_n \chi^{(n)}_i)  \\
& = & \sum_{k,l=1}^d \int_\Omega c^{(\varepsilon_n)}_{kl} \, 
    (\partial_k (\overline \varphi \, u_n)) \, 
   \partial_l (\pi_i - \varepsilon_n \chi^{(n)}_i)  
   - \sum_{k,l=1}^d \int_\Omega c^{(\varepsilon_n)}_{kl} \, u_n \, 
   \overline{\partial_k \varphi}  \, \partial_l (\pi_i - \varepsilon_n \chi^{(n)}_i) 
.
\end{eqnarray*}
Note that $\overline \varphi \, u_n \in H^1_0(\Omega) \subseteq H^1(\Ri^d)$
by extending the function with zero. 
Define $v_n \in H^1(\Ri^d)$ by 
$v_n(x) = (\overline \varphi \, u_n)(\varepsilon_n \, x)$.
Then $v_n$ has compact support.
The first term can be simplified since
\[
\sum_{k,l=1}^d \int_\Omega c^{(\varepsilon_n)}_{kl} \, (\partial_k (\overline \varphi \, u_n)) \, 
   \partial_l (\pi_i - \varepsilon_n \chi^{(n)}_i) 
= \frac{1}{\varepsilon_n} \sum_{k,l=1}^d \int_{\Ri^d} c_{kl} \, (\partial_k v_n) \, 
   \partial_l (\pi_i - \chi_i) 
= 0
\]
by (\ref{etwsc405;4}).
So (\ref{etwsc405;3}) gives
\begin{eqnarray}
\int_\Omega f \, \overline{\varphi(\pi_i - \varepsilon_n \chi^{(n)}_i)}
& = & \int_\Omega \sum_{l=1}^d w_{ln} \, 
     \overline{\partial_l \varphi} \, (\pi_i - \varepsilon_n \chi^{(n)}_i)
   - \sum_{k,l=1}^d \int_\Omega c^{(\varepsilon_n)}_{kl} \, u_n \, 
      \overline{\partial_k \varphi}  \, \partial_l (\pi_i - \varepsilon_n \chi^{(n)}_i)   
\nonumber  \\*
& & \hspace*{60mm} {}
   + \lambda \int_\Omega u_n \, \overline \varphi \, (\pi_i - \varepsilon_n \chi^{(n)}_i) 
.
\label{etwsc405;5}
\end{eqnarray}
Now take the limit $n \to \infty$.
Since $\chi_i \in L_\infty(\Ri^d)$ it follows that 
\[
\lim_{n \to \infty} \int_\Omega f \, \overline{\varphi(\pi_i - \varepsilon_n \chi^{(n)}_i)}
= \int_\Omega f \, \overline \varphi \, \pi_i
.  \]
Also 
\[
\Big| \int_\Omega w_{ln} (\partial_l \varphi) \, \chi^{(n)}_i \Big|
\leq \|\chi_i\|_\infty \, \|w_{ln}\|_{L_2(\Omega)} \, \|\partial_l \varphi\|_{L_2(\Omega)}
\]
for all $n \in \Ni$ and the sequence $(w_{ln})_{n \in \Ni}$ is bounded 
in $L_2(\Omega)$.
So 
\[
\lim_{n \to \infty} 
\int_\Omega \sum_{l=1}^d w_{ln} \, 
     \overline{\partial_l \varphi} \, (\pi_i - \varepsilon_n \chi^{(n)}_i)
= \int_\Omega \sum_{l=1}^d w_l \, 
     \overline{\partial_l \varphi} \, \pi_i
.  \]
In order to evaluate the limit of the second term on the right hand side of 
(\ref{etwsc405;5}), we need a lemma.

\begin{lemma} \label{lwsc406}
Let $\tau \colon \Ri^d \to \Ri$ be measurable  and periodic,
i.e.\ $\tau(x + \gamma) = \tau(x)$ for all $x \in \Ri^d$ and $\gamma \in \Zi^d$.
Suppose that $\tau|_{[0,1]^d} \in L_2([0,1]^d)$.
Let $\Omega \subseteq \Ri^d$ be open.
Let $v, v_1,v_2,\ldots \in L_2(\Omega)$ and $K \subseteq \Omega$ compact.
Suppose that $\lim_{n\to\infty} v_n = v$ in $L_2(\Omega)$ and $\supp v_n \subseteq K$
for all $n \in \Ni$.
Let $(\varepsilon_n)$ be a sequence of positive real numbers such that 
$\lim_{n\to\infty} \varepsilon_n = 0$.
Then 
\[
\lim_{n \to \infty} \int_\Omega \tau(\tfrac{1}{\varepsilon_n} \, x) \, v_n
= \Big( \int_{[0,1]^d} \tau \Big) \int_\Omega v
.  \]
\end{lemma}

\begin{proof} 
The proof is similar to the proof in the one dimensional case
in \cite[Example~2.4]{Bra02}.
\end{proof}

We continue with the proof of Theorem~\ref{twsc405}.
Let $k \in \{ 1,\ldots,d \} $.
Then $\lim u_n \, \overline{\partial_k \varphi} = u \, \overline{\partial_k \varphi}$
weakly in $V$, hence weakly in $H^1(\Ri^d)$. 
Since $\supp \overline{\partial_k \varphi}$ is a compact subset of $\Omega$,
it follows that 
$\lim u_n \, \overline{\partial_k \varphi} = u \, \overline{\partial_k \varphi}$
in $L_2(\Omega)$.
Apply Lemma~\ref{lwsc406} with $\tau = c_{kl} \, \partial_l (\pi_i - \chi_i)$.
Then 
\begin{eqnarray*}
\lim_{n \to \infty} \sum_{k,l=1}^d \int_\Omega c^{(\varepsilon_n)}_{kl} \, u_n \, 
   \overline{\partial_k \varphi}  \, \partial_l (\pi_i - \varepsilon_n \chi^{(n)}_i) 
& = & \sum_{k,l=1}^d \Big( \int_{[0,1]^d} c_{kl} \, \partial_l (\pi_i - \chi_i) \Big)
   \int_\Omega u \, \overline{\partial_k \varphi}  \\
& = & - \sum_{k=1}^d \hat c_{ki} \int_\Omega (\partial_k u) \, \overline \varphi
,
\end{eqnarray*}
where we used the definition of the homogenized coefficients and integrated by 
parts.

The last term in (\ref{etwsc405;5}) is easy and 
\[
\lim_{n \to \infty} 
\lambda \int_\Omega u_n \, \overline \varphi \, (\pi_i - \varepsilon_n \chi^{(n)}_i) 
= \lambda \int_\Omega u \, \overline \varphi \, \pi_i
.  \]
Combining the limits, it follows from (\ref{etwsc405;5}) that 
\[
\int_\Omega f \, \overline \varphi \, \pi_i
= \int_\Omega \sum_{l=1}^d w_l \, 
     \overline{\partial_l \varphi} \, \pi_i
   + \sum_{k=1}^d \hat c_{ki} \int_\Omega (\partial_k u) \, \overline \varphi
   + \lambda \int_\Omega u \, \overline \varphi \, \pi_i
.  \]
Next, choosing $v = \varphi \, \pi_i$ in (\ref{etwsc405;2}) gives
\[
\int_\Omega f \, \overline \varphi \, \pi_i
= \sum_{k=1}^d \int_\Omega w_k \, \overline{\partial_k (\varphi \, \pi_i)}
   + \lambda \int_\Omega u \, \overline \varphi \, \pi_i
= \sum_{k=1}^d \int_\Omega w_k \, \overline{\partial_k \varphi} \, \pi_i
   + \int_\Omega w_i \, \overline \varphi
   + \lambda \int_\Omega u \, \overline \varphi \, \pi_i
.  \]
Hence
\[
\int_\Omega w_i \, \overline \varphi
= \sum_{k=1}^d \hat c_{ki} \int_\Omega (\partial_k u) \, \overline \varphi
.  \]
This equality is valid for all $\varphi \in C_c^\infty(\Omega)$.
So $w_i = \sum_{k=1}^d \hat c_{ki} \, \partial_k u$.
Then (\ref{etwsc405;2}) gives
\[
\sum_{k,l=1}^d \int_\Omega \hat c_{kl} \, (\partial_k u) \, \overline{\partial_l v}
   + \lambda \int_\Omega u \, \overline v
= \int_\Omega f \, \overline v
\]
for all $v \in V$.
Therefore $u \in \dom \widehat A$ and $(\lambda \, I + \widehat A) u = f$.

We showed that 
$\lim_{n \to \infty} (\lambda \, I + A_{\varepsilon_n})^{-1} = (\lambda \, I + \widehat A)^{-1}$
in the weak operator topology for all $\lambda \in \Ci$ with $\RRe \lambda > 0$.
Then Theorem~\ref{twsc201} gives that the limit is also valid in the 
strong operator topology.
\end{proof}

We emphasise once again that we do not assume that $\Omega$ is bounded.
Strong resolvent convergence with bounded $\Omega$ has been obtained in 
Bensoussan, Lions \& Papanicolau \cite[Theorem~1.5.1]{BeLiPa78} using much 
more work involving additional correctors.
Strong resolvent convergence on $\Ri^d$ has been proved in \cite[Theorem~1.7]{ZhPa05}
and for bounded $\Omega$ with Dirichlet or Neumann boundary conditions in 
\cite[Theorems~2.3 and~2.8]{ZhPa05}.
A strong convergence of a slightly different nature can be found in Allaire \cite{Al92}.

In the case $V = H^1_0(\Omega)$ one has the following consequence of 
Theorems~\ref{twsc405} and \ref{twsc201}.

\begin{cor} \label{cwsc407}
For all $\varepsilon \in (0,1]$ let $u_{0,\varepsilon} \in L_2(\Omega)$ and let 
$u_0 \in L_2(\Omega)$.
Further, for all $\varepsilon \in (0,1]$ let 
$u_\varepsilon \in C(([0,\infty) ; L_2 (\Omega ))$ 
and $u \in C(([0,\infty) ; L_2 (\Omega ))$ 
be the solutions of the diffusion equations
\[
\begin{array}{r@{}c@{}ll}
  \partial_t u_n + A_\varepsilon u_\varepsilon & {} = {} 
     & 0 & \mbox{ in } (0,\infty ) \times \Omega , \\[5pt]
  u_\varepsilon & = & 0  & \mbox{ on } (0,\infty ) \times \partial\Omega , \\[5pt]
  u_n (0,\cdot ) & = & u_{0,\varepsilon} & \mbox{ in } \Omega ,
\end{array}
\]
and
\[
\begin{array}{r@{}c@{}ll}
  \partial_t u + \widehat A u & {} = {} & 0 & \mbox{ in } (0,\infty ) \times \Omega , \\[5pt]
  u & = & 0  & \mbox{ on } (0,\infty ) \times \partial\Omega , \\[5pt]
  u (0,\cdot ) & = & u_{0} & \mbox{ in } \Omega ,
\end{array}
\]
respectively.
Let $\delta$, $T>0$ with $\delta \leq T$.
If $\lim_{\varepsilon \downarrow 0} \| u_{0,\varepsilon} - u_0 \|_{L_2 (\Omega )} = 0$, then
\[
 \lim_{\varepsilon \downarrow 0} \sup_{t\in [\delta ,T]} 
     \| u_n(t) - u(t)\|_{L_2 (\Omega )} = 0 .
\]
\end{cor}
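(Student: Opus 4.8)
The plan is to deduce the corollary from Theorem~\ref{twsc405} together with the abstract equivalence in Lemma~\ref{lwsc212}; the only point going beyond a direct citation is the $\varepsilon$-dependence of the initial data. First I would observe that, since $V = H^1_0(\Omega)$ contains $C_c^\infty(\Omega)$, the form domain $V$ is dense in $L_2(\Omega)$, so each $A_\varepsilon$ and also $\widehat A$ is a densely defined positive self-adjoint operator in $L_2(\Omega)$. Hence the semigroups $S^{(\varepsilon)}$ generated by $-A_\varepsilon$ and the semigroup $\widehat S$ generated by $-\widehat A$ are analytic contraction semigroups, uniformly bounded by $1$ on the common sector $\Sigma_{\pi/2}$ with a bound independent of $\varepsilon$, and one has $u_\varepsilon(t) = S^{(\varepsilon)}_t u_{0,\varepsilon}$ and $u(t) = \widehat S_t u_0$ for $t > 0$. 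Since convergence as $\varepsilon \downarrow 0$ can be tested along sequences, it suffices to fix an arbitrary sequence $\varepsilon_n \downarrow 0$ and prove the assertion along it.

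Next I would invoke Theorem~\ref{twsc405}, which, read along the chosen sequence, yields $\lim_{n\to\infty}(\mu\,I+A_{\varepsilon_n})^{-1} = (\mu\,I+\widehat A)^{-1}$ in $(\cl(L_2(\Omega)),\SOT)$ for every $\mu$ with $\RRe\mu > 0$. Thus Statement~\ref{lwsc212-3} of Lemma~\ref{lwsc212} holds for the sequence $(A_{\varepsilon_n})$, and the equivalence with Statement~\ref{lwsc212-1} of that lemma gives, for every fixed $g \in L_2(\Omega)$ and all $\delta, T > 0$ with $\delta < T$,
\[
\lim_{n\to\infty} \sup_{t\in[\delta,T]} \bigl\|(S^{(\varepsilon_n)}_t - \widehat S_t)\,g\bigr\|_{L_2(\Omega)} = 0 .
\]

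Finally I would upgrade from a fixed to a varying initial value. For $t > 0$ and $n \in \Ni$, the triangle inequality and the contractivity of $S^{(\varepsilon_n)}_t$ give
\begin{align*}
\|u_{\varepsilon_n}(t) - u(t)\|_{L_2(\Omega)}
&\le \bigl\|S^{(\varepsilon_n)}_t(u_{0,\varepsilon_n} - u_0)\bigr\|_{L_2(\Omega)} + \bigl\|(S^{(\varepsilon_n)}_t - \widehat S_t)\,u_0\bigr\|_{L_2(\Omega)} \\
&\le \|u_{0,\varepsilon_n} - u_0\|_{L_2(\Omega)} + \bigl\|(S^{(\varepsilon_n)}_t - \widehat S_t)\,u_0\bigr\|_{L_2(\Omega)} .
\end{align*}
Taking the supremum over $t\in[\delta,T]$ and letting $n\to\infty$, the first term vanishes by hypothesis and the second by the previous step applied with $g = u_0$; the trivial case $\delta = T$ is covered by passing to the interval $[\delta/2,T]$. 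As $(\varepsilon_n)$ was arbitrary this proves the corollary. I do not expect a genuine obstacle here: the argument is essentially bookkeeping on top of Theorem~\ref{twsc405} and Lemma~\ref{lwsc212}, and the only thing worth flagging is that it is precisely the uniform-in-$\varepsilon$ contraction bound on the semigroups that lets the error coming from the initial data be absorbed.
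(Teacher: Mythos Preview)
Your proposal is correct and follows essentially the same route as the paper, which simply declares the corollary a consequence of Theorem~\ref{twsc405} and Theorem~\ref{twsc201} without spelling out the details. The only cosmetic difference is that you invoke Lemma~\ref{lwsc212} for the passage from strong resolvent convergence to uniform semigroup convergence on $[\delta,T]$, whereas the paper cites Theorem~\ref{twsc201}; since the operators here are positive self-adjoint, both references yield the same implication, and your explicit triangle-inequality step handling the $\varepsilon$-dependent initial data is exactly what is needed to complete the argument.
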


A similar corollary is valid for Neumann boundary conditions.

\subsection*{Acknowledgements}
The second-named author is most grateful for the hospitality extended
to him during a fruitful stay at the TU Dresden.
He wishes to thank the Institut f\"ur Analysis for financial support.
Part of this work is supported by an
NZ-EU IRSES counterpart fund and the Marsden Fund Council from Government funding,
administered by the Royal Society of New Zealand.
Part of this work is supported by the
EU Marie Curie IRSES program, project `AOS', No.~318910.

 \def\ocirc#1{\ifmmode\setbox0=\hbox{$#1$}\dimen0=\ht0 \advance\dimen0
  by1pt\rlap{\hbox to\wd0{\hss\raise\dimen0
  \hbox{\hskip.2em$\scriptscriptstyle\circ$}\hss}}#1\else {\accent"17 #1}\fi}
  \def\cprime{$'$} \def\cprime{$'$} \def\cprime{$'$}

\end{document}